\documentclass{article}
\usepackage{fullpage}
\usepackage{amsmath}
\usepackage{amsthm}
\usepackage{amssymb}
\usepackage{url}
\usepackage{graphicx}
\usepackage{verbatim}
\usepackage{url}
\usepackage{graphicx}
\usepackage{tikz}
\usetikzlibrary{calc}
\usetikzlibrary{patterns}
\tikzstyle{mybox} = [draw=black, fill=white,  thick,
    rectangle, inner sep=10pt, inner ysep=20pt]
\tikzstyle{mybox} = [draw=black, fill=white,  thick,
    rectangle, inner sep=2pt, inner ysep=2pt]

\newtheorem{thm}{Theorem}

\newtheorem{cor}{Corollary}
\newtheorem{prop}{Proposition}
\theoremstyle{definition}
\newtheorem{definition}{Definition}
\newtheorem{remark}{Remark}
\newtheorem{example}{Example}

\begin{document}

\title{Solving Linear System of Equations Via A Convex Hull Algorithm}
\author{Bahman Kalantari \\
Department of Computer Science, Rutgers University, NJ\\
kalantari@cs.rutgers.edu
}
\date{}
\maketitle

\begin{abstract}
We present new iterative algorithms for solving  a square linear system $Ax=b$ in dimension $n$ by employing the {\it Triangle Algorithm} \cite{kal12}, a fully polynomial-time approximation scheme for testing if the convex hull of a finite set of points in a Euclidean space contains a given point.  By converting  $Ax=b$ into a convex hull problem and solving via the Triangle Algorithm, together with a {\it sensitivity theorem},  we compute in $O(n^2\epsilon^{-2})$ arithmetic operations an approximate solution satisfying $\Vert Ax_\epsilon - b \Vert \leq \epsilon \rho$, where $\rho= \max \{ \Vert a_1 \Vert, \dots, \Vert a_n \Vert, \Vert b \Vert \}$, and $a_i$ is the $i$-th column of $A$.  In another approach we apply the Triangle Algorithm incrementally, solving a sequence of convex hull problems while repeatedly employing a {\it distance duality}.  The simplicity and theoretical complexity bounds of the proposed algorithms, requiring no structural restrictions on the matrix $A$, suggest their potential practicality, offering alternatives to the existing exact and iterative methods, especially for large scale linear systems. The assessment of computational performance however is the subject of future
experimentations.
\end{abstract}

{\bf Keywords:} Linear System of Equations, Iterative Methods, Convex Hull, Linear Programming, Duality,  Approximation Algorithms.

\section{Introduction}

The significance of methods for solving a linear system of equations becomes evident with high school mathematics and science classes.  Solving linear system of equations is undoubtedly one of the most practical problems in numerous aspects of scientific computing.  Gaussian elimination is the most familiar method for solving a linear system of equations, discussed in numerous books, e.g. Atkinson \cite{Atkinson}, Bini and Pan \cite{Bini}, Golub and van Loan \cite{Golub}, and Strang \cite{Strang}.  The method itself is an important motivation behind the study of linear algebra.   Iterative methods for solving linear systems offer very important alternatives to direct methods and find  applications in problems that require the solution of very large or sparse linear systems.  For example, problems from discretized partial differential equations lead to large sparse  systems of equations making the use of direct methods impractical.

Iterative methods generate a sequence of approximate solutions. They begin with an initial approximation and successively improve it until a desired approximation is reached satisfying a certain measure of error. Iterative methods include such classical methods as the Jacobi, the Gauss-Seidel, the successive over-relaxation (SOR), and the symmetric successive over-relaxation method which applies to the case when the coefficient matrix is symmetric. When the coefficient matrix is symmetric and positive definite the conjugate gradient method (CG) also becomes applicable. Convergence rate of iterative methods can often be substantially accelerated by preconditioning.  Some major references in the vast subject of iterative methods include, Barrett et al. \cite{Bar}, Golub and van Loan \cite{Golub},  Greenbaum \cite{Green},  Saad \cite{Saad}, van der Vorst \cite{van1}, Varga \cite{Var}, and Young \cite{Young}.

To guarantee convergence, iterative methods often require the coefficient matrix to satisfy certain conditions, or decompositions, necessary to carry out the iterations.  Some of the theoretical analysis, especially in the earlier works on iterative methods,  is only concerned with convergence or conditions on the coefficient matrix or its decompositions that guarantee convergence. However, in some cases theoretical complexity analysis is provided, see e.g. Reif \cite{Reif} who considers the complexity of iterative methods for sparse diagonally dominant matrices.

The major computational effort in each iteration of the iterative methods involves matrix-vector multiplication. This makes iterative methods very attractive for solving large systems and also for parallelization.  There is also a vast literature on parallelization of iterative methods for large systems, see
Demmel\cite{Dem}, Dongarra et al. \cite{Don},
Duff and van der Vorst \cite{Duff},  van der Vorst  \cite{van1}, van der Vorst and Chan \cite{Van2}.

In this article we offer a very different iterative method for solving an  $n \times n$ linear system $Ax = b$.  It is inspired by a new geometric algorithm for a convex hull problem in \cite{kal12}.  To illustrate the approach to be presented,  we begin with a very simple example. Consider a triangle with vertices $v_1 =(v_{11}, v_{12})$, $v_2 =(v_{21}, v_{22})$, $v_3 =(v_{31}, v_{32})$, and a point $p=(p_1,p_2)$ in their convex hull (Figure \ref{Fig1}).

\begin{figure}[htpb]
	\centering
	
	\begin{tikzpicture}[scale=0.5]
	

		\begin{scope}[black]
		\draw (0,0) -- (7,0) -- (4,3) -- cycle;
		\draw (0,0) node[below] {$v_1$};
		\draw (7,0) node[below] {$v_2$};
		\draw (4,3) node[above] {$v_3$};
		\end{scope}
		\filldraw (4.1,0.8) circle (2pt) node[left] {$p$};
\filldraw (0,0) circle (2pt);
\filldraw (7,0) circle (2pt);
\filldraw (4,3) circle (2pt);

	\end{tikzpicture}
	\caption{A point $p$ in the convex hull of vertices of a triangle.}
\label{Fig1}
\end{figure}
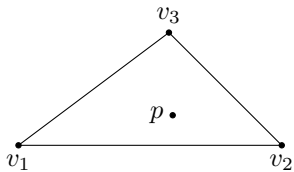

In particular, there exists nonnegative scalars $\alpha_1, \alpha_2, \alpha_3$ such that
\begin{equation}
\alpha_1 v_1+\alpha_2 v_2+\alpha_3 v_3=p, \quad  \alpha_1+\alpha_2+\alpha_3=1.
\end{equation}

When the vertices are not collinear, the scalars can be determined by solving a linear system having an invertible coefficient matrix:

\begin{equation} \label{matrixeq}
\begin{pmatrix} v_{11}& v_{21}&v_{31}\\
v_{12}& v_{22}&v_{32}\\
1& 1&1\\
\end{pmatrix}
\begin{pmatrix} \alpha_1\\
\alpha_2\\
\alpha_3\\
\end{pmatrix}=
\begin{pmatrix} p_1\\
p_2\\
1\\
\end{pmatrix},
\end{equation}

However, since $p$ is known to lie in the convex hull of the vertices, we can solve this system using a new geometric approach.  In this article we will first review our iterative algorithm for solving the general case of this convex hull problem.  Then, we will show how to convert the general linear system $Ax=b$ into a convex hull problem so that the convex hull algorithm would be applicable.

In general the convex hull of a set $S$ in a Euclidean space is the intersection of all convex sets that contain it.  In particular, if $S= \{v_1, \dots, v_n\} \subset \mathbb{R} ^m$, its convex hull
written as $conv(S)$ is given by
\begin{equation}
conv(S)= \bigg \{\sum_{i=1}^n \alpha_i v_i,  \quad \sum_{i=1}^n \alpha_i=1, \quad \alpha_i \geq 0, \quad \forall i=1, \dots, n \bigg \}
\end{equation}
In this article we describe new iterative methods for solving a linear system of equations utilizing a new algorithm, called the {\it Triangle Algorithm} \cite{kal12}, designed to solve the following {\it convex hull decision problem}:

Given a set of points $S= \{v_1, \dots, v_n\} \subset \mathbb{R} ^m$ and a distinguished point $p \in \mathbb{R} ^m$, test if $p$ lies in  $conv(S)$. A practical approximate version of the convex hull decision problem is the following:

Given $\epsilon \in (0,1)$, either compute a point $p_\epsilon \in conv(S)$ such that
\begin{equation} \label{TA}
\Vert p - p_\epsilon \Vert \leq \epsilon \Vert p - v_i \Vert, \quad \text{for some $i=1, \dots, n$};
\end{equation}
or prove that $p \not \in conv(S)$. The complexity of the Triangle Algorithm in computing an approximate solution as in (\ref{TA}), assuming such solution exists, is $O(mn\epsilon^{-2})$ arithmetic operations.  This complexity is attractive in contrast with the complexity of algorithms that solve the convex hull problem exactly.

The convex hull decision problem is a basic and fundamental problem in computational geometry as well as in linear programming (LP).  More general  convex hull problems are described in Goodman and O'Rourke \cite{Goodman}.  On the one hand the convex hull decision problem is a very special case of LP feasibility.  On the other hand, it is well known that through LP duality theory the general LP problem may be cast as a single LP feasibility problem, see e.g. Chv\'atal \cite {chvatal2}.  This problem in turn is related to the convex hull decision problem. In fact it can be justified that the two most famous polynomial-time LP algorithms, the ellipsoid algorithm of Khachiyan  \cite{kha79} and the projective algorithm of Karmarkar \cite{kar84}, are in fact explicitly or implicitly designed to  solve a case of the convex hull decision problem where $p=0$, see \cite{jinkal}.  In \cite{kha90} another polynomial-time algorithm is given for this special homogeneous case based on diagonal matrix scaling. For integer inputs all known polynomial-time LP algorithms would exhibit theoretical complexity that is polynomial in $m$, $n$, and the size of encoding of the input data, often denoted by $L$, see e.g. \cite{kha79}. The number $L$ can generally be taken to be dependent on the logarithm of $mn$ and of the logarithm of the absolute value of the largest entry in the input data. No {\it strongly polynomial-time} algorithm is known for LP, i.e. an algorithm that would in particular solve the convex hull decision problem in time complexity polynomial in $m$ and $n$.

The convex hull decision problem   can also be formulated as the minimization of a convex quadratic function over a simplex. This particular convex program  has found  applications in statistics, approximation theory, and machine learning,  see e.g Clarkson \cite{clark2008} and Zhang \cite{zhang} who consider the analysis of a greedy algorithm for the more general problem of minimizing certain convex functions over a simplex (equivalently, maximizing  concave functions over a simplex). The oldest version of such greedy algorithms is Frank-Wolfe algorithm, \cite{Frank}.   Special cases of the problem include  support vector machines (SVM), and approximating functions as convex combinations  of other functions, see e.g. Clarkson \cite{clark2008}. The problem of computing the closest point of the convex hull of a set of points to the origin, known as {\it polytope distance} is the case of the convex hull decision problem  where $p$ is the origin. In some applications the  polytope distance refers to the distance  between two convex hulls. Gilbert's algorithm \cite{Gilbert}, \cite{Gil2}  for the polytope distance problem is one of the earliest known algorithms for the problem. G{\"a}rtner and Jaggi \cite{Gartner} show Gilbert's algorithm coincides with Frank-Wolfe algorithm when applied to the minimization of a convex quadratic function over a simplex. In this case the algorithm is known as {\it sparse greedy approximation}.

From the description of Gilbert's algorithm in \cite{Gartner} it does not follow  that Gilbert's algorithm and the Triangle Algorithm are identical.  However, there are similarities in theoretical performance of the two algorithms and  in \cite{kal12}  we give such theoretical comparisons between the Triangle Algorithm for the convex hull decision problem and the sparse greedy approximation.  Indeed we believe that the simplicity of the Triangle Algorithm and a new duality theorem that inspires the algorithm, as well as its theoretical performance all make it distinct from other algorithms for the convex hull decision problem. Furthermore, these features of the Triangle Algorithm may encourage and inspire new applications of the algorithm. The present article is testimonial to this claim where we will use the algorithm to solve a linear system.

In what follows we present two iterative algorithms for solving an invertible matrix equation  $Ax=b$ in dimension $n$ via the Triangle Algorithm.  In the first algorithm we convert $Ax=b$ into a convex hull problem. Then given $\epsilon >0$,  we apply the Triangle Algorithm and a {\it sensitivity theorem} from \cite{kal12} to compute in $O(n^2\epsilon^{-2})$ arithmetic operations an approximate solution satisfying $\Vert Ax_\epsilon - b \Vert \leq \epsilon \rho$, where $\rho= \max \{ \Vert b \Vert, \Vert a_1 \Vert, \dots, \Vert a_n \Vert \}$, and $a_i$ is the $i$-th column of $A$.  In the second algorithm we apply the Triangle Algorithm incrementally, solving a sequence of convex hull problems while using a {\it distance duality} from \cite{kal12}.  The first algorithm requires an a priori estimate of the least coordinate of the solution while the second algorithm builds up this estimate iteratively.  These offer alternatives to exact methods and to iterative methods such as the Jacobi, Gauss-Seidel, and successive over-relaxation methods, requiring none of their structural restrictions. The extreme  simplicity and  theoretical complexity bounds suggest the potential practicality of the methods, especially for large scale or sparse systems. The assessment of computational performance is the subject of future
experimentations. In \cite{Kalan12} we considered an application of the Triangle Algorithm for solving a very special case of a matrix equation where the vector $b$ is unknown but some attributes such as its Euclidean norm is available.

This article is organized as follows. In Section \ref{sec2} we describe the Triangle Algorithm from \cite{kal12}, a duality that it makes use of, the {\it distance duality},  as well as some geometric properties of this duality. In Section \ref{sec3}  we consider solving a linear system, $Ax=b$.  In \ref{subsec1} we consider the case where $x=A^{-1}b$ is known to be nonnegative and show how the Triangle Algorithm together with a {\it sensitivity theorem} from \cite{kal12} solves a linear system and state its complexity for computing an approximate solution. In \ref{subsec2} we consider the general case of $Ax=b$ where there is no information on the solution and show how this can be  converted into the first case by computing an a priori lower bound $t_*$ on the minimum component of the solution vector $x=A^{-1}b$. Then we state a complexity result.  In \ref{subsec3} we offer an incremental version of the Triangle Algorithm for solving $Ax=b$ not requiring an a priori lower bound on $t_*$.  We conclude with some final remarks.

\section{Review of The Triangle Algorithm} \label{sec2}

Throughout we let $\Vert \cdot \Vert$  denote the Euclidean norm.  In \cite{kal12}  we proved the following characterization theorem for the convex hull decision problem.

\begin{thm} \label{thm1} {\bf (Distance Duality \cite{kal12} )} Let $S= \{v_1, \dots, v_n\} \subset \mathbb{R} ^m$, $p \in \mathbb{R} ^m$. Then we have

(i): $p \in conv(S)$ if and only if given any $p' \in conv(S)$, there exists $v_j$ such that $\Vert p'- v_j \Vert  \geq  \Vert p - v_j \Vert $.

(ii): $p \not \in conv(S)$ if and only if there exists $p' \in conv(S)$ such that $\Vert p' -v_i \Vert  < \Vert p - v_i \Vert $, $\forall$ $i$. \qed
\end{thm}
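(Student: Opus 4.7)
The plan is to observe first that statements (i) and (ii) are logical contrapositives of each other: the right-hand side of (i) asserts that for every $p'\in conv(S)$ the inequality $\|p'-v_j\|\geq\|p-v_j\|$ holds for some $j$, whose negation is exactly the existence of a $p'\in conv(S)$ with $\|p'-v_i\|<\|p-v_i\|$ for all $i$; and the left-hand sides are evidently negations of each other. So I would only prove (ii), obtaining (i) for free.

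For the easy direction of (ii) (the ``if'' part), I would assume such a $p'\in conv(S)$ exists and argue by contradiction, supposing $p=\sum_{i=1}^n \alpha_i v_i$ with $\alpha_i\geq 0$ and $\sum\alpha_i=1$. Squaring $\|p'-v_i\|<\|p-v_i\|$ and expanding the inner products yields the equivalent family of strict scalar inequalities
\begin{equation*}
2\langle p-p',\,v_i\rangle < \|p\|^2-\|p'\|^2,\qquad i=1,\dots,n.
\end{equation*}
Forming the convex combination with weights $\alpha_i$, and using the fact that at least one $\alpha_i$ is positive (because $\sum\alpha_i=1$) so the strict inequality survives, I obtain $2\langle p-p',p\rangle<\|p\|^2-\|p'\|^2$, which rearranges to $\|p-p'\|^2<0$, a contradiction. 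Hence $p\notin conv(S)$.

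For the harder direction of (ii) (the ``only if'' part), the natural candidate for the witness $p'$ is the Euclidean projection $p^\ast$ of $p$ onto $conv(S)$. Because $conv(S)$ is compact and convex, $p^\ast$ exists and is unique, and the standard variational inequality for projection onto a convex set gives $\langle p-p^\ast,\,q-p^\ast\rangle\leq 0$ for every $q\in conv(S)$; specializing to $q=v_i$ yields $\langle p-p^\ast,v_i-p^\ast\rangle\leq 0$ for each $i$. The key identity I would then use is
\begin{equation*}
\|p-v_i\|^2-\|p^\ast-v_i\|^2 \;=\; \|p-p^\ast\|^2 \;-\; 2\langle p-p^\ast,\,v_i-p^\ast\rangle,
\end{equation*}
which is a direct expansion. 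The second term on the right is nonnegative by the variational inequality, and the first term is strictly positive because $p\notin conv(S)$ forces $p\neq p^\ast$. Therefore $\|p-v_i\|^2>\|p^\ast-v_i\|^2$ for all $i$, and $p^\ast$ is the desired witness.

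I do not anticipate a serious obstacle: both implications are essentially exercises in the geometry of Euclidean projection. The two minor points that require care are (a) preserving strict inequality when averaging the $n$ inequalities against the coefficients $\alpha_i$ (handled by noting $\sum\alpha_i=1$ so some $\alpha_i>0$), and (b) concluding strict inequality in the ``only if'' direction, which relies on $\|p-p^\ast\|^2>0$ rather than just the variational inequality, i.e.\ on the hypothesis that $p$ lies strictly outside $conv(S)$.
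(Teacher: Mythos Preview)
Your argument is correct in both directions. The contrapositive reduction is valid, the ``if'' direction of (ii) is handled cleanly via the scalar inequalities $2\langle p-p',v_i\rangle<\|p\|^2-\|p'\|^2$ and their convex combination (with strictness preserved because some $\alpha_i>0$), and the ``only if'' direction is exactly the projection argument one would expect, with the identity
\[
\|p-v_i\|^2-\|p^\ast-v_i\|^2=\|p-p^\ast\|^2-2\langle p-p^\ast,\,v_i-p^\ast\rangle
\]
checked correctly.

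As for comparison with the paper: the paper does not prove this theorem at all. It is stated with a \qed\ and attributed to the companion reference \cite{kal12}; the present article merely quotes the result in order to motivate the Triangle Algorithm. So there is no in-paper proof to compare against. Your proof is self-contained and would serve perfectly well if one wished to make the article independent of \cite{kal12} on this point. Incidentally, the paper's Proposition~1 records precisely the reformulation $2(p-p')^Tv_j\geq \|p\|^2-\|p'\|^2$ that you use in the ``if'' direction, and Theorem~\ref{thmWit} (also cited from \cite{kal12}) is essentially the separating-hyperplane reading of your projection argument, so your approach is entirely in the spirit of the surrounding material.
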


\begin{definition} We say $p' \in conv(S)$ is a {\it witness}  if it satisfies $\Vert p' - v_i \Vert  < \Vert p - v_i \Vert$, for all $i=1, \dots, n$.
\end{definition}

Each witness certifies the infeasibility of $p$ in $conv(S)$. The next theorem shows that each witness actually induces a separating hyperplane. The set $W_p$ of all such witnesses is the intersection of $conv(S)$ and open balls $B_i$ of radius $ \Vert p - v_i \Vert$ centered at $v_i$, $i=1, \dots, n$  and forms a convex open set in the relative interior of $conv(S)$.

\begin{thm}  \label{thmWit} {\bf (Characterization of Witness Set \cite{kal12})}
$p' \in W_p$ if and only if the orthogonal bisector  hyperplane of the line segment $pp'$ separates $p$ from $conv(S)$. \qed
\end{thm}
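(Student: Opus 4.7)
The plan is to reduce the theorem to an explicit description of the orthogonal bisector and its two open half-spaces, then use convexity. Write
\begin{equation*}
H = \{x \in \mathbb{R}^m : \Vert x - p \Vert = \Vert x - p' \Vert\},
\end{equation*}
and let
\begin{equation*}
H^- = \{x : \Vert x - p \Vert < \Vert x - p' \Vert\}, \qquad H^+ = \{x : \Vert x - p' \Vert < \Vert x - p \Vert\}
\end{equation*}
be the two open half-spaces that $H$ bounds. Trivially $p \in H^-$ and $p' \in H^+$. The crucial observation is that the defining witness inequality $\Vert p' - v_i \Vert < \Vert p - v_i \Vert$ is literally the statement $v_i \in H^+$. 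So the theorem amounts to showing that ``all $v_i$ lie in $H^+$'' is equivalent to ``$H$ strictly separates $p$ from $conv(S)$.''

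For the forward direction, I would assume $p' \in W_p$, so that $v_i \in H^+$ for every $i$. Since $H^+$ is convex (as an open half-space), every convex combination $\sum_{i=1}^n \alpha_i v_i$ with $\alpha_i \geq 0$ and $\sum \alpha_i = 1$ also lies in $H^+$; hence $conv(S) \subset H^+$. Together with $p \in H^-$, the bisector $H$ strictly separates $p$ from $conv(S)$.

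For the backward direction, I would assume $H$ separates $p$ from $conv(S)$. Since $p \in H^-$, the separating hyperplane argument forces $conv(S)$ to lie in the opposite (closed) half-space, and strictness gives $conv(S) \subset H^+$. In particular each vertex $v_i \in H^+$, which translates back to $\Vert p' - v_i \Vert < \Vert p - v_i \Vert$ for every $i$. Since $p' \in conv(S)$ is implicit in the definition of $W_p$ (recall $W_p = conv(S) \cap \bigcap_i B_i$), we conclude $p' \in W_p$.

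No step is really hard; the whole content is a rewriting of the inequality $\Vert p' - v_i \Vert < \Vert p - v_i \Vert$ as a half-space membership statement, after which convexity of $H^+$ does all the work. The only place to be careful is the backward direction, where one must note that the hypothesis $p' \in conv(S)$ is built into the definition of $W_p$ so that the equivalence is meaningful; otherwise a $p'$ outside $conv(S)$ could generate a separating bisector without being a witness.
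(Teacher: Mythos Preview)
Your argument is correct. Note, however, that the present paper does not actually prove this theorem: the statement is imported from \cite{kal12} and closed with a \qed, so there is no in-paper proof to compare against. Your approach---rewriting the witness inequality $\Vert p'-v_i\Vert < \Vert p-v_i\Vert$ as the half-space membership $v_i \in H^+$ and then using convexity of the open half-space $H^+$ to pass from the vertices to all of $conv(S)$---is the natural and standard proof, and your remark about the implicit hypothesis $p' \in conv(S)$ in the backward direction is the right caveat to flag.
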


\begin{figure}[htpb]
	\centering
	
	\begin{tikzpicture}[scale=0.35]
	

		\begin{scope}[red]
		\draw (0.0,0.0) circle (4.177319714841085);
		\draw (7.0,0.0) circle (3.008321791298265);
		\draw (4.0,3.0) circle (2.2278820596099706);
		\end{scope}
		
		\draw (0.0,0.0) -- (7.0,0.0) -- (4.0,3.0) -- cycle;
		\draw (0,0) node[below] {$v_1$};
		\draw (7,0) node[below] {$v_2$};
		\draw (4,3) node[above] {$v_3$};
		\filldraw (4.1,0.8) circle (2pt) node[left] {$p$};
\filldraw (0,0) circle (2pt);
\filldraw (7,0) circle (2pt);
\filldraw (4,3) circle (2pt);

\begin{scope}[red]
		 \clip  (19.5,0.0) circle (5.1);
		 \clip (26.5,0.0) circle (7.8);
		 \clip (23.5,3.0) circle (3.6);
          \clip (19.5,0.0) -- (26.5,0.0) -- (23.5,3.0) -- cycle;
\fill[color=gray!29] (-30, 10) rectangle (30, -10);
\end{scope}

\begin{scope}[red]
         \draw (19.5,0.0) circle (5.1);
		 \draw (26.5,0.0) circle (7.8);
		 \draw (23.5,3.0) circle (3.6);
\end{scope}
			
\draw (19.5,0.0) -- (26.5,0.0) -- (23.5,3.0) -- cycle;
		\draw (19.5,0) node[below] {$v_1$};
		\draw (26.5,0) node[below] {$v_2$};
		\draw (23.5,3) node[above] {$v_3$};
\filldraw (19.5,0) circle (2pt);
\filldraw (26.5,0) circle (2pt);
\filldraw (23.5,3) circle (2pt);
\filldraw (20.5,5) circle (2pt) node[above] {$p$};
	\end{tikzpicture}
	\caption{Examples of empty $W_p$ ($p \in conv(S)$) and nonempty  $W_p$ ($p \not \in conv(S)$), gray area.}
\label{Fig2}
\end{figure}
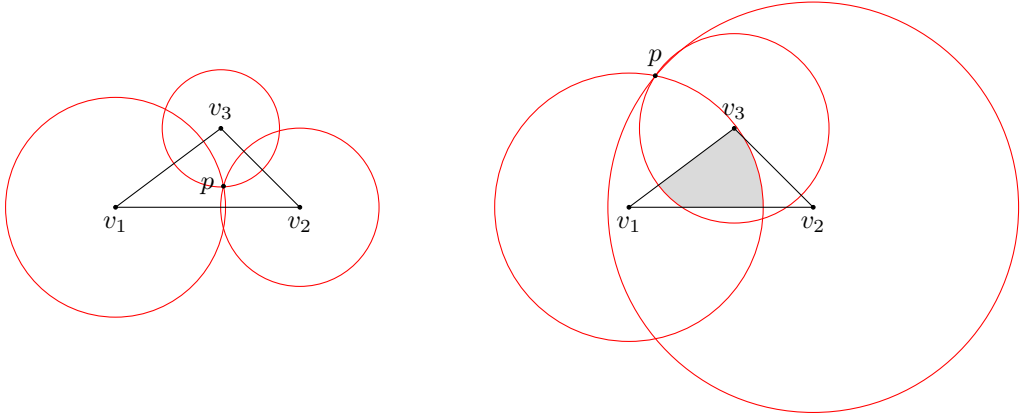

The following is a simple but useful consequence of Theorem \ref{thmWit}:

\begin{cor}  \label{cornew} Suppose $p \not \in conv(S)$.  Let
\begin{equation}
\Delta= \min \{\Vert p - x \Vert: \quad x \in conv(S)\}.
\end{equation}
Then any $p' \in W_p$ gives an estimate of $\Delta$ to within a factor of two. More precisely,
\begin{equation} \label{halfapprox}
\frac{1}{2} d(p,p') \leq  \Delta \leq d(p,p'). \qed
\end{equation}
\end{cor}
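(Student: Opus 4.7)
The plan is to split the proof into the two inequalities, each of which should follow from one of the earlier results plus a one-line triangle-inequality argument.

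For the upper bound $\Delta \le d(p,p')$, I would just note that $p' \in W_p \subseteq \mathrm{conv}(S)$ by the definition of the witness set, so $p'$ is one of the competitors in the minimization defining $\Delta$, and hence $\Delta \le \|p - p'\| = d(p,p')$. Nothing more is needed here.

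The interesting direction is $\frac{1}{2}d(p,p') \le \Delta$, and the idea is to invoke Theorem \ref{thmWit}. Let $p^\ast$ be a closest point of $\mathrm{conv}(S)$ to $p$, so $\|p - p^\ast\| = \Delta$; such a point exists because $\mathrm{conv}(S)$ is compact. By Theorem \ref{thmWit}, because $p' \in W_p$, the orthogonal bisector $H$ of the segment $pp'$ separates $p$ from $\mathrm{conv}(S)$. Every point on the $p'$-side of $H$ is at least as close to $p'$ as to $p$, so in particular $\|p^\ast - p'\| \le \|p^\ast - p\| = \Delta$. Combining this with the triangle inequality gives
\begin{equation}
d(p,p') \;\le\; \|p - p^\ast\| + \|p^\ast - p'\| \;\le\; \Delta + \Delta \;=\; 2\Delta,
\end{equation}
which is exactly the desired bound.

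The only step that requires any thought is the separation argument: one has to recognize that the characterization of $W_p$ via the perpendicular bisector of $pp'$ directly forces every point of $\mathrm{conv}(S)$, and in particular the nearest point $p^\ast$, to lie at least as close to $p'$ as to $p$. Once that is in hand, the rest is a one-line application of the triangle inequality, so I do not anticipate any genuine obstacle here.
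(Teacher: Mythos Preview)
Your argument is correct and is exactly the kind of proof the paper has in mind: the corollary is stated there without proof, merely flagged as ``a simple but useful consequence of Theorem~\ref{thmWit},'' and your derivation---the upper bound from $p'\in\mathrm{conv}(S)$, the lower bound from the perpendicular-bisector separation plus the triangle inequality---is the natural way to cash that out. Nothing to change.
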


\begin{definition} Given $\epsilon \in (0,1)$ we say $p' \in conv(S)$ is an $\epsilon$-approximate solution if it satisfies
\begin{equation}
\Vert p'-p \Vert \leq \epsilon R,
\end{equation}
where
\begin{equation}
R= \max \big \{\Vert p- v_1 \Vert, \dots, \Vert p- v_n \Vert \big \}.
\end{equation}
\end{definition}

Using the characterization theorem (Theorem \ref{thm1}), in \cite{kal12} we described a simple algorithm, called the  {\it Triangle Algorithm}.
Given a desired tolerance $\epsilon \in (0,1)$,  and a current {\it iterate} $p' \in conv(S)$, in each iteration the Triangle Algorithm searches for a triangle $\triangle pp'v_j$ where $v_j \in S$ satisfies $\Vert p' - v_j \Vert \geq \Vert p - v_j \Vert $. Given that such triangle exists, the algorithm uses $v_j$ as a {\it pivot point} to ``pull'' the current iterate $p'$ closer to $p$ to get a new iterate $p'' \in conv(S)$.  If no such a triangle exists, then by Theorem \ref{thm1}, $p'$ is a witness certifying that $p$ is not in $conv(S)$. The Triangle Algorithm consists of iterating two steps:

\begin{center}
\begin{tikzpicture}
\node [mybox] (box){%
    \begin{minipage}{0.9\textwidth}
    \begin{center}
    \underline{
{\bf  Triangle Algorithm ($S=\{v_1, \dots, v_n\}$, $p$)}}\
\end{center}
\begin{itemize}
\item
{\bf Step 1.}
Given an {\it iterate} $p' \in conv(S) \setminus \{p\}$, check if there exists a
{\it pivot point} $v_j \in S$, i.e. $\Vert p' -v_j \Vert \geq  \Vert p - v_j \Vert$. If no such $v_j$ exists, then $p'$ is a {\it witness}, stop.

\item
{\bf Step 2.}
Otherwise, compute the {\it step-size}
\begin{equation}
\alpha = \frac{(p-p')^T(v_j-p')}{\Vert v_j - p' \Vert^2}.
\end{equation}
Let the {\it iterate} be defined as
\begin{equation} \label{pdp}
p'' =
\begin{cases}
(1-\alpha)p' + \alpha v_j, &\text{if $\alpha \in [0,1]$;}\\
v_j, &\text{otherwise.}
\end{cases}
\end{equation}
Replace $p'$ with $p''$, go to Step 1.
\end{itemize}

    \end{minipage}};
\end{tikzpicture}
\end{center}

By an easy calculation that shift $p'$ to the origin, it follows that
the point $p''$ in Step 2 is the closest point to $p$ on the line $p'v_j$, see Figure \ref{Fig5}.

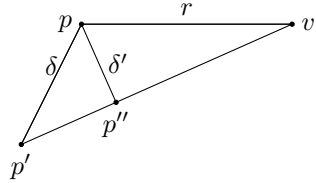
\begin{figure}[htpb]
	\centering
	\begin{tikzpicture}[scale=0.4]

	
		\draw (0.0,0.0) -- (7.0,0.0) -- (-2.0,-4.0) -- cycle;
      \draw (0,0) -- (7,0) node[pos=0.5, above] {$r$};
      \draw (-2,-4) -- (0,0) node[pos=0.5, above] {$\delta$};
       \draw (0,0) -- (1.15,-2.6) node[pos=0.5, right] {$\delta'$};
       \draw (1.15,-2.6) node[below] {$p''$};
       \filldraw (1.15,-2.6) circle (2pt);
		\draw (0,0) node[left] {$p$};
		\draw (7,0) node[right] {$v$};
		\draw (-2,-4) node[below] {$p'$};
           \filldraw (0,0) circle (2pt);
\filldraw (7,0) circle (2pt);
\filldraw (-2,-4) circle (2pt);
		
	\end{tikzpicture}
\begin{center}
\caption{Depiction of gaps $\delta=d(p',p)$, $\delta'=d(p'',p)$.} \label{Fig5}
\end{center}
\end{figure}

Since $p''$ is a convex combination of $p'$ and $v_j$, it will remain in $conv(S)$.  The algorithm replaces $p'$ with $p''$ and repeats the iterative step.  Note that a pivot point $v_j$ may or may not be a vertex of $conv(S)$. In fact
 $p''$ can be explicitly written as a convex combination of $v_i$'s,
\begin{equation} \label{eqrep}
p''=   \sum_{i=1}^n \beta_i v_i, \quad  \beta_j=(1-\alpha)\alpha_j+\alpha,  \quad \beta_i= (1-\alpha)\alpha_i,  \quad \forall i \not =j.
\end{equation}

The following straightforward result implies that testing for a pivot point or a witness we do not need to compute square-roots.
\begin{prop}  A point $v_j$ is a pivot point if and only if
\begin{equation} \label{eqaa11}
(p-p')^Tv_j \geq \frac{1}{2}(\Vert p \Vert^2- \Vert p'\Vert^2).
\end{equation}
Equivalently, $p'$ is a witness if and only if
\begin{equation} \label{eqaa12a}
(p-p')^Tv_i < \frac{1}{2}(\Vert p \Vert^2- \Vert p'\Vert^2), \quad \forall i=1, \dots, n.
\end{equation}
\end{prop}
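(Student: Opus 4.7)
The plan is a direct algebraic manipulation starting from the definition of a pivot point. By the definition introduced just before the statement of the Triangle Algorithm, $v_j$ is a pivot point precisely when $\Vert p'-v_j\Vert \geq \Vert p-v_j\Vert$. Since both sides are nonnegative, this is equivalent to $\Vert p'-v_j\Vert^2 \geq \Vert p-v_j\Vert^2$, and it suffices to prove the equivalence between this squared inequality and the claimed linear inequality.

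The key step is to expand both squared norms using the identity $\Vert u-v\Vert^2 = \Vert u\Vert^2 - 2u^Tv + \Vert v\Vert^2$. The $\Vert v_j\Vert^2$ terms appear on both sides and cancel. What remains is $\Vert p'\Vert^2 - 2{p'}^Tv_j \geq \Vert p\Vert^2 - 2p^Tv_j$, which rearranges to $2(p-p')^Tv_j \geq \Vert p\Vert^2 - \Vert p'\Vert^2$. Dividing by $2$ yields exactly (\ref{eqaa11}). This direction is an ``if and only if'' at every step, so the first claim is established.

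For the second claim, recall from Step 1 of the Triangle Algorithm and Theorem \ref{thm1} that $p'$ is a witness precisely when no pivot point exists, i.e., when for every $i$ the pivot condition fails. The pivot condition failing means the strict inequality $\Vert p'-v_i\Vert < \Vert p-v_i\Vert$. Applying the equivalence established above (with the inequality reversed and made strict), we obtain $(p-p')^Tv_i < \frac{1}{2}(\Vert p\Vert^2 - \Vert p'\Vert^2)$ for every $i=1,\dots,n$, which is (\ref{eqaa12a}).

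There is essentially no obstacle here; the proposition is a one-line computation whose purpose is computational rather than mathematical, namely to point out that checking for a pivot or certifying a witness can be done using only inner products and squared norms (the quantities $\Vert p\Vert^2$ and $\Vert p'\Vert^2$ can moreover be cached and updated from one iteration to the next), so that no square roots are needed in an implementation of the algorithm.
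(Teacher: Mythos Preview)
Your proof is correct and is exactly the straightforward computation the paper has in mind; the paper in fact states the proposition without proof, labeling it a ``straightforward result.'' Your expansion of $\Vert p'-v_j\Vert^2 \geq \Vert p-v_j\Vert^2$ and subsequent cancellation is the intended argument, and your remark about the computational purpose (avoiding square roots) matches the paper's stated motivation verbatim.
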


\begin{remark}  \label{rmk1} If $p=0$, then $v_j$ is a pivot point if and only if
\begin{equation} \label{eqaa111}
p'^Tv_j \leq  \frac{1}{2}\Vert p'\Vert^2.
\end{equation}
In particular, if $p'^Tv_j <0$, then $v_j$ is a pivot point.
Equivalently, $p'$ is a witness if and only if
\begin{equation} \label{eqaa12a2}
p'^Tv_i > \frac{1}{2}\Vert p'\Vert^2, \quad \forall i=1, \dots, n.
\end{equation}
\end{remark}

The following theorem in \cite{kal12} estimates the complexity of the Triangle Algorithm.

\begin{thm}   \label{thm4} The Triangle Algorithm correctly solves  the convex hull problem  as follows:

(i) Suppose $p \in conv(S)$. Given $\epsilon >0$, the number of iterations $K_\epsilon$ to compute a point $p_\epsilon$ in $conv(S)$ so that

\begin{equation} \label{eqaa3}
\Vert p_\epsilon - p \Vert  \leq \epsilon \Vert p - v_j \Vert,
\end{equation}
for some $v_j \in S$ satisfies
\begin{equation} \label{iter1}
K_\epsilon \leq   \frac{48}{\epsilon^2} =  O(\epsilon^{-2}).
\end{equation}
(ii) Suppose $p \not \in conv(S)$. If $\Delta= \min \big \{\Vert x - p \Vert : \quad x \in conv(S) \big \}$,
the number of iterations $K_\Delta$ to compute a witness, a point
$p_\Delta$ in $conv(S)$ so that $d(p_\Delta,v_i) < d(p, v_i)$ for all $v_i \in S$, satisfies
\begin{equation} \label{iter2}
K_\Delta  \leq  \bigg  \lceil \frac{8R^2}{\Delta^2} \ln \bigg (\frac{2 \delta_0}{\Delta} \bigg ) \bigg \rceil. \qed
\end{equation}
\end{thm}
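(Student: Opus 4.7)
The plan is to reduce both parts to a single one-step inequality
\[
\delta_{k+1}^2 \leq \delta_k^2\Bigl(1 - \frac{\delta_k^2}{4 r_k^2}\Bigr),
\]
where $\delta_k := \|p_k - p\|$ and $r_k := \|p_k - v_{j_k}\|$ for the pivot $v_{j_k}$, and then feed this inequality into two different recurrence analyses. Setting $u = p' - p$ and $v = v_j - p'$, in the unclipped case $\alpha \in [0,1]$ the iterate $p'' = p' + \alpha v$ with $\alpha = -(u \cdot v)/r^2$ is the orthogonal foot of $p$ on the line through $p'$ and $v_j$ (Figure \ref{Fig5}), so a direct expansion gives $\|p'' - p\|^2 = \delta^2 - (u \cdot v)^2/r^2$. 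The pivot condition $\|v\|^2 \geq \|u+v\|^2$ rearranges to $u \cdot v \leq -\delta^2/2$, whence $(u \cdot v)^2 \geq \delta^4/4$ and the master inequality follows. The clipped case $\alpha > 1$, forcing $p'' = v_j$, satisfies $-u \cdot v > r^2$, and I would check it produces the even stronger decrement $\|v_j - p\|^2 < \delta^2 - r^2$, so the master inequality remains valid.

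For part (i), initializing at the vertex of $S$ closest to $p$ gives $\delta_0 \leq R$, and monotonicity of $\delta_k$ keeps $r_k \leq \delta_k + R \leq 2R$. Writing $u_k = \delta_k^2/R^2 \in (0,1]$, the master inequality becomes $u_{k+1} \leq u_k(1 - u_k/16)$; the standard reciprocal trick $1/u_{k+1} \geq 1/u_k + 1/16$ yields $u_k \leq 16/k$, hence $\delta_k \leq 4R/\sqrt{k}$. Termination is reached once $\delta_k \leq \epsilon R$, since then $\|p_k - p\| \leq \epsilon \|p - v_{j^*}\|$ for the farthest vertex $v_{j^*}$, giving $K_\epsilon = O(\epsilon^{-2})$ as stated; the published constant $48$ (as opposed to the $16$ my sketch delivers) is slack absorbed from the clipped-step analysis and a possibly coarser $r_k$-bound.

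For part (ii), the new ingredient is that $p \notin conv(S)$ forces $\delta_k \geq \Delta$ for every $k$. The master inequality then sharpens to the genuine geometric contraction $\delta_{k+1}^2 \leq \delta_k^2(1 - \Delta^2/(16R^2))$, and iterating with $\ln(1-x) \leq -x$ gives $\delta_K^2 \leq \delta_0^2 \exp(-K \Delta^2/(16R^2))$. Since the algorithm can neither push $\delta_K$ below $\Delta$ nor continue once no pivot exists, it must declare a witness before the right-hand side drops below $\Delta^2$, which yields a bound of the advertised form $K_\Delta = O(R^2 \Delta^{-2} \log(\delta_0/\Delta))$. The main obstacle throughout is bookkeeping rather than ideas: fitting the clipped update $\alpha > 1$ cleanly into the unified recurrence, tracking the exact constants $48$ and $8$ appearing in the statement (which likely come from a different bound on $r_k$ or a different initialization), and confirming that the disjunctive termination criterion ``for some $j$'' in (i) is indeed met once $\delta_k$ first drops below $\epsilon R$, which is immediate by picking the farthest vertex.
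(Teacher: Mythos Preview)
The paper does not actually prove this theorem; it is quoted from \cite{kal12} and marked with a \qed, so there is no in-paper argument to compare against. That said, your sketch is the standard route and is essentially correct, with one caveat and one remark.

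The caveat concerns the clipped case. Your claim that ``the master inequality remains valid'' when $\alpha>1$ is not literally true: if $r_k$ is tiny then $\delta_k^2(1-\delta_k^2/(4r_k^2))$ is negative while $\delta_{k+1}^2\ge 0$. What you actually get in the clipped case is $\delta_{k+1}^2 = \|v_j-p\|^2 \le r_k^2$ together with $\delta_{k+1}^2 < \delta_k^2 - r_k^2$, and combining these gives $\delta_{k+1}^2 < \tfrac12\delta_k^2$. This halving is stronger than the downstream inequality you feed into the recurrence (namely $\delta_{k+1}^2\le \delta_k^2(1-\delta_k^2/(16R^2))$ in part (i) and $\delta_{k+1}^2\le \delta_k^2(1-\Delta^2/(16R^2))$ in part (ii)), since $\delta_k\le R$ and $\Delta\le R$. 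So the fix is just to state the one-step bound at the level of these latter inequalities rather than the ``raw'' master inequality.

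The remark is that your constants (16 in part (i), 32 in part (ii)) are in fact sharper than the announced 48 and 8; since the proof lives in \cite{kal12} we cannot say here exactly where the slack enters, but your acknowledgement that the discrepancy is bookkeeping rather than substance is correct. Your termination argument in (ii)---the recurrence would force $\delta_K<\Delta$, contradicting $\delta_k\ge\Delta$, so a witness must appear before that---is the right way to close the loop.
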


\section{Solving A Linear System  Via The Triangle Algorithm} \label{sec3}

Consider solving $Ax=b$ with $A$ an invertible $n \times n$ real matrix. Let  $A=[a_1, a_2, \dots, a_n]$, where $a_i \in \mathbb{R} ^n$.
\begin{definition} \label{defaprx} We say $x_0$ is an  $\epsilon$-approximate solution of $Ax=b$ if
\begin{equation} \label{eddef1}
\Vert A x_0 - b \Vert \leq \epsilon \rho, \quad \rho=\max \big \{\Vert a_1 \Vert, \dots, \Vert a_n \Vert, \Vert b \Vert \big \}.
\end{equation}
\end{definition}

\subsection{Solving A Linear System With Nonnegative Solution} \label{subsec1}

First, we assume that $x=A^{-1}b \geq 0$ and show how to solve this as a convex hull problem. Next, we solve the general case relaxing this condition.  Since $A$ is invertible, $Ax=0$ has only the trivial solution. In particular,
\begin{equation} \label{recession}
0 \not \in conv \big (\{a_1, \dots, a_n\} \big ).
\end{equation}

In \cite{kal12} we described an application of the Triangle Algorithm in computing an approximate feasible solution of the feasibility problem in linear programming. Here we use that approach to solve $Ax=b$ via the Triangle Algorithm to give for any $\epsilon \in (0,1)$, an $\epsilon$-approximate solution. Clearly, by adjusting $\epsilon$  we can compute an approximate solution with absolute error of $\epsilon$, however the approximate solution within a relative error as defined in Definition \ref{defaprx} is a more sensible measure of approximation.  The following auxiliary result is easy to prove.

\begin{prop}  Assume $x=A^{-1}b \geq 0$.  Then $0 \in conv \big (\{a_1, \dots, a_n, -b \} \big )$. $\qed$
\end{prop}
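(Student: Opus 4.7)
The plan is to write down an explicit convex combination of $a_1, \dots, a_n, -b$ equal to zero, using the coordinates of $x$ as the weights. Starting from $Ax = b$ and expanding the matrix-vector product column-wise, one obtains $\sum_{i=1}^n x_i a_i = b$, which I would rearrange as $\sum_{i=1}^n x_i a_i + 1\cdot(-b) = 0$. At this point $0$ is already exhibited as a nonnegative linear combination of the $n+1$ points $a_1, \dots, a_n, -b$ with coefficients $x_1, \dots, x_n, 1$.

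To promote this to a convex combination I would normalize by the total weight $s := 1 + \sum_{i=1}^n x_i$. Since $x \geq 0$ by hypothesis, $s \geq 1 > 0$ so the normalization is well-defined. Setting $\alpha_i := x_i/s$ and $\beta := 1/s$ yields nonnegative scalars with $\sum_{i=1}^n \alpha_i + \beta = 1$, and by linearity $\sum_{i=1}^n \alpha_i a_i + \beta(-b) = 0$. This is precisely the assertion $0 \in \mathrm{conv}(\{a_1,\dots,a_n,-b\})$.

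No serious obstacle is expected; the argument is a one-line rearrangement of $Ax=b$ followed by a normalization. It is worth noting that appending $-b$ to the column set is essential: by invertibility of $A$ we already have $0 \notin \mathrm{conv}(\{a_1,\dots,a_n\})$ as recorded in (\ref{recession}), so a nontrivial role is played by the coefficient $\beta = 1/s > 0$ of $-b$ in the convex combination above. This same normalization will also be what links a future approximate convex-hull solution back to an approximate solution of $Ax=b$, so it is useful to keep the explicit formulas $\alpha_i = x_i/s$, $\beta = 1/s$ visible in the proof.
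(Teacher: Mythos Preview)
Your argument is correct and is exactly the natural one-line proof the paper has in mind; the paper in fact omits the proof entirely (declaring the result ``easy to prove'' and placing a $\qed$), so there is nothing to compare against beyond confirming that your normalization-by-$s$ argument is the intended content.
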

It follows that solving $Ax=b$ approximately is equivalent to finding an appropriate approximation to $0$ in the set
$conv \big (\{a_1, \dots, a_n, -b\})$.

The following sensitivity theorem establishes the needed accuracy to which an approximate solution in $conv \big (\{a_1, \dots, a_n, -b \} \big )$ should be computed. It is tailored for the case of solving $Ax=b$, $x=A^{-1}b \geq 0$.

\begin{thm} \label{thmc} {\bf (Sensitivity Theorem \cite{kal12})} Let
\begin{equation} \label{deltaz}
\Delta_0 = \min \big \{\Vert p \Vert:  \quad p \in conv \big (\{a_1, \dots, a_n\} \big)  \big \},
\end{equation}
\begin{equation} \label{rprime}
\rho= \max \big \{\Vert a_1 \Vert, \dots, \Vert a_n \Vert,  \Vert b \Vert  \big \}.
\end{equation}
Let $\Delta_0'$ be any number such that $0 < \Delta_0' \leq \Delta_0$.
Suppose $\epsilon \in (0,1)$ satisfies
\begin{equation}  \label{rprimebd}
\epsilon  \leq \frac{\Delta_0'}{2 \rho}.
\end{equation}
Suppose we have computed
\begin{equation} \label{eqthmc1}
p'= (\alpha_1 a_1+ \cdots + \alpha_n a_n) - \alpha_{n+1} b  \in conv (\{a_1, \dots, a_n, -b\})
\end{equation}
satisfying
\begin{equation} \label{eqthmc2}
\Vert p' \Vert \leq \epsilon \rho
\end{equation}
Let
\begin{equation} \label{eqthmc3}
x_0=(\frac{\alpha_1}{\alpha_{n+1}}, \dots, \frac{\alpha_n}{\alpha_{n+1}})^T.
\end{equation}
Then, $x_0 \geq 0$, and if
\begin{equation} \label{eqthmc3A}
\epsilon'= 2\bigg (1+ \frac{\Vert b \Vert }{\Delta'_0} \bigg ) \epsilon,
\end{equation}
we have
\begin{equation} \label{eqthmc4}
\Vert Ax_0 - b \Vert \leq \epsilon' \rho,
\end{equation}
i.e. $x_0$ is an $\epsilon'$-approximate solution of $Ax=b$. \qed
\end{thm}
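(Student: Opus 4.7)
The plan is to express the residual $Ax_0-b$ directly in terms of $p'$ and $\alpha_{n+1}$, and then obtain a lower bound on $\alpha_{n+1}$ from the hypotheses on $\Delta_0'$ and on $\|p'\|$. From (\ref{eqthmc1}) we have $\sum_{i=1}^n \alpha_i a_i = p' + \alpha_{n+1} b$, so that
\begin{equation*}
A x_0 = \frac{1}{\alpha_{n+1}} \sum_{i=1}^n \alpha_i a_i = \frac{p' + \alpha_{n+1} b}{\alpha_{n+1}}, \qquad A x_0 - b = \frac{p'}{\alpha_{n+1}}.
\end{equation*}
Thus $\|Ax_0-b\| = \|p'\|/\alpha_{n+1} \leq \epsilon \rho / \alpha_{n+1}$, and the target bound (\ref{eqthmc4}) reduces to the single inequality $\alpha_{n+1} \geq 1/\bigl(2(1+\|b\|/\Delta_0')\bigr)$. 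Nonnegativity of $x_0$ is then automatic, since the $\alpha_i$ are convex combination coefficients and $\alpha_{n+1}>0$.

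The heart of the argument is the lower bound on $\alpha_{n+1}$. I would set $q = \sum_{i=1}^n \alpha_i a_i$, so $\sum_{i=1}^n \alpha_i = 1-\alpha_{n+1}$. Assuming $\alpha_{n+1}<1$ (the case $\alpha_{n+1}=1$ yields $p'=-b$, which combined with (\ref{eqthmc2}) and (\ref{rprimebd}) is easily ruled out), the point $q/(1-\alpha_{n+1})$ is a convex combination of the $a_i$'s, so by (\ref{deltaz}),
\begin{equation*}
\|q\| \geq (1-\alpha_{n+1}) \Delta_0 \geq (1-\alpha_{n+1}) \Delta_0'.
\end{equation*}
On the other hand, $q = p' + \alpha_{n+1} b$ together with (\ref{eqthmc2}) and (\ref{rprime}) gives
\begin{equation*}
\|q\| \leq \|p'\| + \alpha_{n+1} \|b\| \leq \epsilon \rho + \alpha_{n+1} \|b\|.
\end{equation*}
Combining the two bounds yields $(1-\alpha_{n+1})\Delta_0' \leq \epsilon \rho + \alpha_{n+1} \|b\|$, which rearranges to
\begin{equation*}
\alpha_{n+1} \geq \frac{\Delta_0' - \epsilon \rho}{\Delta_0' + \|b\|}.
\end{equation*}

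Finally, the hypothesis (\ref{rprimebd}), $\epsilon \rho \leq \Delta_0'/2$, gives $\Delta_0' - \epsilon \rho \geq \Delta_0'/2$, hence
\begin{equation*}
\alpha_{n+1} \geq \frac{\Delta_0'}{2(\Delta_0' + \|b\|)} = \frac{1}{2(1 + \|b\|/\Delta_0')}.
\end{equation*}
Plugging this into $\|Ax_0-b\| \leq \epsilon \rho / \alpha_{n+1}$ produces the claimed $\|Ax_0-b\| \leq \epsilon' \rho$ with $\epsilon'$ as in (\ref{eqthmc3A}).

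I expect the only delicate step to be extracting the lower bound on $\alpha_{n+1}$: one must notice that discarding the $-b$ component and renormalizing produces a point in $conv(\{a_1,\dots,a_n\})$, so that the invertibility hypothesis (via $\Delta_0>0$) forces $\alpha_{n+1}$ to carry enough weight. Once this observation is in hand, the rest is algebra and the triangle inequality.
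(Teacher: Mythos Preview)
Your argument is correct and is the natural proof. Note, however, that the paper does not actually give its own proof of this theorem: the statement is imported from \cite{kal12} and closed with a bare \qed, so there is nothing in the present paper to compare against. Your derivation---writing $Ax_0-b=p'/\alpha_{n+1}$ and then bounding $\alpha_{n+1}$ from below by renormalizing $q=\sum_{i=1}^n\alpha_i a_i$ into $conv(\{a_1,\dots,a_n\})$ and invoking $\Delta_0$---is exactly the expected route and is almost certainly what the cited reference does.

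One small correction: the case $\alpha_{n+1}=1$ is not ``ruled out'' by (\ref{eqthmc2}) and (\ref{rprimebd}). If $\alpha_{n+1}=1$ then $x_0=0$ and $\Vert Ax_0-b\Vert=\Vert b\Vert=\Vert p'\Vert\leq\epsilon\rho\leq\epsilon'\rho$, so the conclusion simply holds directly. You should also state explicitly at the outset that $\alpha_{n+1}>0$: if $\alpha_{n+1}=0$ then $p'\in conv(\{a_1,\dots,a_n\})$, whence $\Vert p'\Vert\geq\Delta_0\geq\Delta_0'$, contradicting $\Vert p'\Vert\leq\epsilon\rho\leq\Delta_0'/2$. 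With these two cosmetic fixes the proof is complete.
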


Now consider the following Two-Phase Triangle Algorithm:

\begin{center}
\begin{tikzpicture}
\node [mybox] (box){%
    \begin{minipage}{0.9\textwidth}
\begin{center}
\underline{
{\bf  Two-Phase Triangle Algorithm ($A=[a_1, \dots, a_n]$, $b$)}}
\end{center}

\begin{itemize}
\item
{\bf Phase 1.} Call {\bf Triangle Algorithm}($\{a_1, \dots, a_n\}, 0$) to get a witness $p'$ $\in conv (\{a_1, \dots, a_n\})$.

\item
{\bf Phase 2.} Starting with $p'$ in Phase 1, call {\bf Triangle Algorithm}($\{a_1, \dots, a_n, -b\}, 0$).
\end{itemize}

    \end{minipage}};
\end{tikzpicture}
\end{center}

The first phase of the algorithm attempts to find a witness $p' \in conv(\{a_1, \dots, a_n\})$ that proves $0$ is not in this convex hull. Any such witness $p'$ according to (\ref{halfapprox}) gives rise to a lower bound to $\Delta_0$ which in turn can be used to select $\epsilon$, see (\ref{rprimebd}) in Theorem \ref{thmc}. From this and the complexity result in Theorem \ref{thm4}, we have:

\begin{thm} \label{thm2phase} Given any $\epsilon_0 \in (0,1)$, in order to compute an $\epsilon_0$-approximate solution (i.e. a solution $x_0 \geq 0$ such that $\Vert Ax_0-b \Vert \leq \epsilon_0 \rho $)
it suffices to set  $\Delta'_0=0.5d(p',0)$, where $p'$ is the witness computed in Phase 1 of the Two-Phase Triangle Algorithm.  Then in Phase 2 of the algorithm it suffices to compute a point $p' \in conv(\{a_1, \dots, a_n, -b\}$ so that
\begin{equation}
\Vert p' \Vert \leq \epsilon \rho ,
\end{equation}
where $\epsilon$ satisfies
\begin{equation}
\epsilon \leq \frac{\Delta_0'}{2}\min \bigg \{ \frac{1}{\rho}, \quad \frac{\epsilon_0}{(\Delta_0' + \Vert b \Vert )} \bigg \}.
\end{equation}

In particular, since $\Delta_0 \leq \rho$, it suffices to pick
\begin{equation}
\epsilon  \leq \frac{\Delta_0'}{4 \rho} \epsilon_0.
\end{equation}
Then the number of iterations in Phase 2 of the algorithm, $K_\epsilon$, each of cost $O(n^2)$ arithmetic operations, satisfies
\begin{equation}
K_\epsilon \leq  \frac{48}{{\epsilon_0}^2} \frac{\rho^2}{{\Delta'_0}^2}. \qed
\end{equation}
\end{thm}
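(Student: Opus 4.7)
The proof plan is to chain three earlier ingredients: Corollary~\ref{cornew} validates $\Delta'_0$ as a legitimate lower bound on $\Delta_0$, the Sensitivity Theorem (Theorem~\ref{thmc}) converts a near-zero point of $conv(\{a_1,\ldots,a_n,-b\})$ into an $\epsilon_0$-approximate solution of $Ax=b$, and Theorem~\ref{thm4}(i) bounds the Triangle Algorithm iterations in Phase 2.

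First I would verify that Phase 1 terminates with a witness. Since $A$ is invertible, \eqref{recession} gives $0 \notin conv(\{a_1,\ldots,a_n\})$, so Theorem~\ref{thm4}(ii) guarantees termination with some witness $p'$. Applying Corollary~\ref{cornew} with $p=0$ to this witness yields $\tfrac{1}{2} d(p',0) \leq \Delta_0$, so the choice $\Delta'_0 = \tfrac{1}{2} d(p',0)$ lies in $(0,\Delta_0]$ and is therefore admissible in Theorem~\ref{thmc}.

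Next I would derive the admissible tolerance $\epsilon$ for Phase 2. Theorem~\ref{thmc} demands both the smallness condition \eqref{rprimebd}, namely $\epsilon \leq \Delta'_0/(2\rho)$, and amplifies the accuracy by $\epsilon' = 2(1+\|b\|/\Delta'_0)\epsilon$. Requiring $\epsilon' \leq \epsilon_0$ gives $\epsilon \leq \epsilon_0 \Delta'_0/[2(\Delta'_0+\|b\|)]$; intersecting with \eqref{rprimebd} produces precisely the stated $\min$ bound. For the simpler sufficient condition $\epsilon \leq \Delta'_0 \epsilon_0/(4\rho)$, I would observe that every column $a_i$ lies in $conv(\{a_1,\ldots,a_n\})$, so $\Delta_0 \leq \|a_i\| \leq \rho$; combined with $\|b\|\leq\rho$ this gives $\Delta'_0 + \|b\|\leq 2\rho$, making the second term of the $\min$ at least $\Delta'_0 \epsilon_0/(4\rho)$, while the first term dominates this quantity whenever $\epsilon_0 < 1$.

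Finally, the iteration bound follows by applying Theorem~\ref{thm4}(i) to $S=\{a_1,\ldots,a_n,-b\}$ with $p=0$. After at most $48/\epsilon^2$ iterations, the algorithm returns an iterate $p'$ with $\|p'\| \leq \epsilon \|v_j\|$ for some vertex $v_j$; since every $v_j$ in the enlarged set has norm at most $\rho$, this furnishes the hypothesis $\|p'\| \leq \epsilon \rho$ needed by Theorem~\ref{thmc}. Substituting the allowable $\epsilon$ yields an iteration count of order $\rho^2/(\epsilon_0^2 {\Delta'_0}^2)$, matching the stated bound up to an absolute constant. The main work is not any one deep step but the careful bookkeeping that the simplified choice simultaneously satisfies both the sensitivity hypothesis \eqref{rprimebd} and the $\epsilon'\leq\epsilon_0$ requirement; the per-iteration $O(n^2)$ cost then follows from the inner-product form of the pivot test in \eqref{eqaa11} applied to a vertex set of size $n+1$ in $\mathbb{R}^n$.
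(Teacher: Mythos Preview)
Your proposal is correct and follows exactly the route the paper intends: the paper does not give a stand-alone proof of Theorem~\ref{thm2phase} but states it as an immediate consequence of Corollary~\ref{cornew} (to justify $\Delta'_0$), the Sensitivity Theorem~\ref{thmc} (to pass from $\|p'\|\leq\epsilon\rho$ to $\|Ax_0-b\|\leq\epsilon_0\rho$), and Theorem~\ref{thm4}(i) (for the iteration count). Your write-up actually supplies more detail than the paper, including the verification that $\Delta'_0+\|b\|\leq 2\rho$ and the honest observation that the displayed constant $48$ in the final bound matches only up to an absolute factor once $\epsilon=\Delta'_0\epsilon_0/(4\rho)$ is substituted into $48/\epsilon^2$.
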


\begin{remark}  The complexity of Phase 1 is dominated by that of Phase 2. To see this note that from Theorem \ref{thm4} to compute a witness in testing if $0$ lies in $conv(\{a_1, \dots, a_n\})$, the algorithm requires $K_{\Delta_0}$ iterations.  Since $\epsilon \leq \Delta'_0/ 2 \rho$, and $\delta_0 \leq \rho$, it follows that
\begin{equation}
\ln \frac{2\delta_0} {\Delta'_0} \leq \ln \frac{1}{\epsilon}.
\end{equation}
Thus $K_{\Delta_0}$ is bounded above by the bound on $K_\epsilon$ in Theorem
\ref{thm2phase}.
\end{remark}

\begin{remark}   In practice, with the goal of computing an $\epsilon$-approximate solution, we do not need to compute an a priori estimate $\Delta_0'$ on $\Delta_0$  to be used in Theorem \ref{thm2phase}.  We could forgo Phase 1 altogether and simply run Phase 2. Assuming that $A$ is invertible, in each iteration we compute from the current iterate $p'$ the current approximate solution $x_0$,  then check if it is an $\epsilon_0$-approximate solution of $Ax=b$.
\end{remark}

The following result, while not practical, suggests a lower bound to $\Delta_0$ can be stated based on the smallest eigenvalue of $A^TA$.

\begin{prop}  Let $\Delta_0$ be as in (\ref{deltaz}).  Let $\lambda_{\min}$ be the minimum eigenvalue of the matrix $Q=A^TA$. Then
\begin{equation} \label{delta1}
\Delta_0= \min \big \{\Vert Ax\Vert:  \quad \sum_{i=1}^n x_i =1, \quad x_i \geq 0  \big \}  \geq \frac{1}{\sqrt{n}} \lambda_{\min}.
\end{equation}
\end{prop}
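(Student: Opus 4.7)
The plan is to combine a standard Rayleigh-quotient lower bound with a simplex-norm inequality. Writing $\|Ax\|^2 = x^T A^T A x = x^T Q x$, and observing that $Q = A^T A$ is symmetric positive definite (invertibility of $A$ ensures this), I would first invoke the Rayleigh principle: for every $x \in \mathbb{R}^n$,
\begin{equation*}
\|Ax\|^2 \;=\; x^T Q x \;\geq\; \lambda_{\min}\, \|x\|^2.
\end{equation*}

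Next, I would lower bound $\|x\|^2$ over the probability simplex $\{x : \sum x_i = 1,\ x_i \geq 0\}$. By Cauchy--Schwarz applied to the all-ones vector and $x$,
\begin{equation*}
1 \;=\; \Bigl(\sum_{i=1}^n x_i\Bigr)^2 \;\leq\; n \sum_{i=1}^n x_i^2 \;=\; n\,\|x\|^2,
\end{equation*}
so $\|x\|^2 \geq 1/n$ on the simplex (with equality only at the barycenter $x = (1/n,\dots,1/n)$). Chaining these two inequalities gives $\|Ax\|^2 \geq \lambda_{\min}/n$ for every $x$ in the simplex, and taking the minimum over such $x$ yields
\begin{equation*}
\Delta_0 \;\geq\; \sqrt{\lambda_{\min}/n} \;=\; \frac{\sqrt{\lambda_{\min}}}{\sqrt{n}}.
\end{equation*}

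There is no real obstacle; the proof is essentially a two-line application of Rayleigh plus Cauchy--Schwarz, together with the reformulation of $\Delta_0$ as $\min\{\|Ax\| : x \in \Delta_{n-1}\}$ that the proposition already supplies. The only subtle point worth flagging is a dimensional one: the right-hand side of the claimed bound, $\lambda_{\min}/\sqrt{n}$, has the units of $\|Ax\|^2$ rather than $\|Ax\|$, so the bound one actually obtains by this argument (and, I believe, what is intended) is $\sqrt{\lambda_{\min}}/\sqrt{n}$; my proof would produce that corrected form and note the discrepancy in passing.
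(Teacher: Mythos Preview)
Your proof is correct and follows exactly the paper's approach: apply the Rayleigh bound $x^{T}Qx \geq \lambda_{\min}\,\|x\|^{2}$ together with the simplex inequality $\|x\|^{2}\geq 1/n$. Your remark about the dimensional discrepancy is on target: the paper's own proof ends with $\|Ax'\|^{2}\geq \lambda_{\min}^{2}/n$, which agrees with the stated bound only if $\lambda_{\min}$ is read as the least singular value of $A$ rather than the least eigenvalue of $Q=A^{T}A$; under the definition actually given, the correct conclusion is indeed $\Delta_{0}\geq \sqrt{\lambda_{\min}}/\sqrt{n}$.
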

\begin{proof}
Suppose $\Delta_0=\Vert Ax'\Vert$, where $\sum_{i=1}^n x'_i=1$, $x'\geq 0$. It is easy to prove $\Vert x' \Vert \geq 1/\sqrt{n}$. We thus have
\begin{equation}
\Vert Ax'\Vert^2=x'^TQx'= \Vert x' \Vert^2 (\frac{x'}{\Vert x' \Vert})^T Q (\frac{x'}{\Vert x' \Vert}) \geq \frac{1}{n} \lambda^2_{\min}.
\end{equation}
\end{proof}

Before considering the general case of solving $Ax=b$ we consider a small example.

\begin{example} Consider the $2 \times 2$ linear system.
\begin{equation}
 \begin{pmatrix}
  ~3 & -2  \\
  ~2 & ~1
 \end{pmatrix}
  \begin{pmatrix}
  x_1 \\
  x_2
 \end{pmatrix}
 =
 \begin{pmatrix}
  -1 \\
  ~4
 \end{pmatrix}
\end{equation}
Its solution is $x=(1,2)$.
Thus the Two-Phase Triangle Algorithm  can compute an approximate solution to any prescribed accuracy. We will consider one iteration, skipping Phase 1. To compute an approximate solution we test via the Triangle Algorithm if $0$ lies in the convex hull of the set
\begin{equation}
\bigg \{
 a_1= \begin{pmatrix} 3 \\
  2
 \end{pmatrix},  a_2=\begin{pmatrix}
  -2 \\
  ~1
 \end{pmatrix},  -b=\begin{pmatrix}
  ~1 \\
  -4
 \end{pmatrix}  \bigg \}.
 \end{equation}
 Let the initial iterate $p'$ be selected as the center of mass, $p'=(a_1+a_2-b)/3 =(2/3, -1/3)^T$.  Thus the initial approximate solution to $Ax=b$ is $x_0=(\alpha_1/\alpha_2, \alpha_2/\alpha_3)^T=(1,1)^T$. The corresponding error is $\Vert Ax_0-b \Vert= \sqrt{5}$.  In Step 1 of the Triangle Algorithm we select $a_2$ as the pivot point since from Remark \ref{rmk1} we have
 \begin{equation}
 p'^Ta_2 <0.
 \end{equation}
 The corresponding step size is
 \begin{equation}
 \alpha =-\frac{p'^T(a_2-p')}{\Vert a_2-p' \Vert^2}= \frac{1}{4}.
 \end{equation}
 Thus
 \begin{equation}
 p''=(1-\frac{1}{4})p'+ \frac{1}{4}a_2 = \frac{1}{4}a_1 + \frac{1}{2}a_2 - \frac{1}{4}b.
 \end{equation}
 This implies the next approximate solution to $Ax=b$ is $x_1=(1,2)$ (using that the new coefficients are $\alpha_1=1/4$, $\alpha_2=1/2$, $\alpha_3= 1/4$) which is indeed the exact solution.
\end{example}

\subsection{Solving A General Linear System Via The Triangle Algorithm} \label{subsec2}

In this section we consider the general case of solving $Ax=b$ with $A$ an invertible matrix, where it is not known if the solution $x=A^{-1}b$ is nonnegative.  Let $e=(1, \dots, 1 )^T \in \mathbb{R}^m$.  Then there exists
$t \geq 0$ such that if $x$ is the solution to
\begin{equation} \label{eqx1}
A(x-te)=b,
\end{equation}
then $x \geq 0$.  Thus if we let $u=Ae$, then
\begin{equation} \label{eqx2}
Ax=b+tu, \quad x \geq 0,
\end{equation}
is solvable.  Since $A$ is invertible $u \not =0$. Let
\begin{equation} \label{eqtstar}
t_*= \min \big \{t: A(x-te)=b, \quad x \geq 0 \big \}.
\end{equation}

If a value $t$ is known such that $t \geq t_*$,  then we can apply the Two-Phase Triangle Algorithm  to solve  (\ref{eqx2}).  In the complexity analysis we use bounds that also depend upon $t$. Specifically, set
\begin{equation}
b(t)=b+tu.
\end{equation}
Then we may restate Theorem \ref{thmc} as well as the complexity result, Theorem \ref{thm2phase},  with $\Vert b \Vert$ and $\rho$ replaced with $\Vert b(t) \Vert $ and $\rho(t)$ defined as
\begin{equation} \label{rprime3}
\rho(t)= \max \big \{\Vert a_1 \Vert, \dots, \Vert a_n \Vert,  \Vert b(t) \Vert  \big \}.
\end{equation}

For any given $t \geq 0 $ we can state  the following complexity in testing the solvability of (\ref{eqx2}).

\begin{thm} \label{thm2primet}
Given $\epsilon_0 >0$, and any $t \geq 0$, and any lower bound $\Delta_0'$, $0 <\Delta_0' \leq  \Delta_0$, the  Two-Phase Triangle Algorithm in at most
$$\frac{48}{\epsilon_0^2} \frac{\rho(t)^2}{{\Delta'_0}^2}$$
iterations, each of cost $O(n^2)$ arithmetic operations, either determines that $Ax =b+tu$, $x \geq 0$ is infeasible, or computes $x_0$ satisfying
$$\Vert Ax_0 - b \Vert \leq \epsilon_0 \rho.$$
\end{thm}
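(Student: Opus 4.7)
The plan is to reduce the problem to the nonnegative-solution case of Section \ref{subsec1} via the substitution $b \mapsto b(t) = b + tu$, and then apply Theorem \ref{thm2phase} with $\rho$ replaced throughout by $\rho(t)$ and $b$ by $b(t)$. Concretely, the Two-Phase Triangle Algorithm is run on the point set $\{a_1, \ldots, a_n, -b(t)\}$ with target $p=0$, and any nonnegative approximate solution $y_0$ to the shifted system $Ax = b(t)$, $x \geq 0$, is converted back to the original system via $x_0 = y_0 - te$.

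First, consider the feasible branch $t \geq t_*$, in which case $0 \in conv(\{a_1, \ldots, a_n, -b(t)\})$. Applying Theorem \ref{thm2phase} to the shifted instance produces, in at most $48 \rho(t)^2/(\epsilon_0^2 {\Delta_0'}^2)$ Phase-2 iterations, nonnegative coefficients $\alpha_1, \ldots, \alpha_{n+1}$ summing to one, whence $y_0 = (\alpha_1/\alpha_{n+1}, \ldots, \alpha_n/\alpha_{n+1})^T \geq 0$ satisfies $\Vert A y_0 - b(t) \Vert \leq \epsilon_0 \rho(t)$. Setting $x_0 = y_0 - te$ and using $u = Ae$, the identity
$$A x_0 - b \;=\; A y_0 - t A e - b \;=\; A y_0 - b(t)$$
transfers the accuracy bound from the shifted system to the original one.

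For the infeasible branch $t < t_*$, we have $0 \notin conv(\{a_1, \ldots, a_n, -b(t)\})$, and by part (ii) of Theorem \ref{thm4} the algorithm must terminate with a witness---which is precisely an infeasibility certificate for $Ax = b(t)$, $x \geq 0$---within $\lceil 8 R^2 \Delta^{-2} \ln(2 \delta_0 / \Delta) \rceil$ iterations on the shifted instance. Here $R \leq \rho(t)$, $\Delta \geq \Delta_0'$, and the logarithmic factor is bounded by $1/\epsilon$ exactly as in the remark following Theorem \ref{thm2phase}, so this count is absorbed into the same $48 \rho(t)^2/(\epsilon_0^2 {\Delta_0'}^2)$ bound. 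The per-iteration arithmetic cost is $O(n^2)$: each pivot or witness test reduces, via (\ref{eqaa111})--(\ref{eqaa12a2}) applied with $p = 0$, to inner products against the $n+1$ vectors in $\mathbb{R}^n$.

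The main---and essentially only---subtlety is verifying that the iteration bound holds uniformly across the two branches, so that the witness-termination estimate from Theorem \ref{thm4}(ii) is dominated by the polynomial estimate of Theorem \ref{thm2phase}; this is exactly the content of the remark noted above. Once this is granted, the theorem follows directly from Theorem \ref{thm2phase} applied to the shifted instance together with the translation identity displayed above.
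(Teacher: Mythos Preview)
Your reduction to the shifted system $Ax=b(t)$, $x\geq 0$, and your appeal to Theorem~\ref{thm2phase} for the feasible branch, together with the translation identity $Ax_0-b=Ay_0-b(t)$, is exactly how the paper derives this result: the paper offers no separate proof, treating Theorem~\ref{thm2primet} as Theorem~\ref{thm2phase} with $b$ and $\rho$ replaced by $b(t)$ and $\rho(t)$.

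There is, however, a genuine error in your handling of the infeasible branch. You assert $\Delta\geq\Delta_0'$, where $\Delta$ is the distance from $0$ to $conv(\{a_1,\dots,a_n,-b(t)\})$ and $\Delta_0'\leq\Delta_0$ is a lower bound on the distance from $0$ to $conv(\{a_1,\dots,a_n\})$. But the first hull \emph{contains} the second, so the inequality goes the other way: $\Delta\leq\Delta_0$. No lower bound on $\Delta$ in terms of $\Delta_0'$ is available, and indeed $\Delta$ can be arbitrarily small when $t$ is just below $t_*$. The remark after Theorem~\ref{thm2phase} that you invoke is about Phase~1, i.e.\ about finding a witness in $conv(\{a_1,\dots,a_n\})$, whose distance to the origin \emph{is} $\Delta_0$; it says nothing about the Phase~2 hull with the extra vertex $-b(t)$, so it does not transfer.

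The fix is not through Theorem~\ref{thm4}(ii) at all. The $48/\epsilon^2$ count in Theorem~\ref{thm4}(i) comes from a per-step decrease of $\Vert p'\Vert$ that holds whenever a pivot exists; that decrease lemma does not use the hypothesis $p\in conv(S)$. Hence, running Phase~2 with $\epsilon\leq\Delta_0'\epsilon_0/(4\rho(t))$, after at most $48/\epsilon^2$ iterations either some step fails to find a pivot (yielding a witness, hence an infeasibility certificate) or the target accuracy $\Vert p'\Vert\leq\epsilon\rho(t)$ is reached. This gives the stated uniform bound without any control on $\Delta$. (Note also that your identity delivers $\Vert Ax_0-b\Vert\leq\epsilon_0\rho(t)$, not $\epsilon_0\rho$ as printed; the $\rho$ in the theorem statement appears to be a typographical slip for $\rho(t)$.)
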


Next we show how to actually find an upper  bound on $t_*$ defined in (\ref{eqtstar}).

\begin{thm}  \label{thm7} Consider  $Ax=b$ and assume $A$ is invertible. Let $x^*=A^{-1}b$ and $x^*_i$ be its $i$-th component.
Let $Q=A^TA$, $w=A^Tb$, and let the $i$-th column of $Q$ be denoted by $q_i$.  Let $q_{\min}= \min \big \{\Vert q_j \Vert, j=1, \dots, n \big \}$.
Let $\lambda_{\min}$ be the minimum eigenvalue of $Q$. Set

\begin{equation} \label{eq11}
\tau_*=\bigg ( \prod_{j=1}^n \Vert q_j \Vert \bigg ) \frac{\Vert w \Vert}{q_{\min} \lambda^n_{\min}}, \quad  \tau'_*=\bigg ( \prod_{j=1}^n \Vert q_j \Vert \bigg ) \frac{\Vert w \Vert}{q_{\min} \det (Q)}.
\end{equation}
Then
\begin{equation} \label{eqxtar}
x^*_i \geq - \tau'_* \geq -\tau_*.
\end{equation}
In particular, $\tau_*  \geq  \tau'_*  \geq t_*$.
\end{thm}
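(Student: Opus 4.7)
The plan is to express $x^*$ entry-wise using Cramer's rule on the normal equations and then bound each entry via Hadamard's inequality.

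First I would rewrite the system in terms of $Q$: since $A$ is invertible, so is $Q = A^T A$, and $Q^{-1} A^T = A^{-1}$, hence
\begin{equation}
x^* = A^{-1} b = Q^{-1} A^T b = Q^{-1} w.
\end{equation}
Equivalently, $Q x^* = w$, so by Cramer's rule,
\begin{equation}
x^*_i = \frac{\det(Q^{(i)})}{\det(Q)},
\end{equation}
where $Q^{(i)}$ is the matrix obtained from $Q$ by replacing its $i$-th column $q_i$ with $w$.

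Next I would apply Hadamard's inequality to the numerator: for any matrix the absolute value of its determinant is at most the product of the Euclidean norms of its columns, giving
\begin{equation}
|\det(Q^{(i)})| \leq \Vert w \Vert \prod_{j \neq i} \Vert q_j \Vert = \frac{\Vert w \Vert \prod_{j=1}^n \Vert q_j \Vert}{\Vert q_i \Vert} \leq \frac{\Vert w \Vert \prod_{j=1}^n \Vert q_j \Vert}{q_{\min}}.
\end{equation}
Combining with Cramer's rule yields $|x^*_i| \leq \tau'_*$, and in particular $x^*_i \geq -\tau'_*$, establishing the first inequality in (\ref{eqxtar}).

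For the second inequality $\tau'_* \leq \tau_*$, I would use that $Q$ is symmetric positive definite (since $A$ is invertible), so all its eigenvalues are at least $\lambda_{\min} > 0$, and therefore $\det(Q) = \prod_{j=1}^n \lambda_j \geq \lambda_{\min}^n$, which upon reciprocating yields $\tau'_* \leq \tau_*$ and hence $-\tau'_* \geq -\tau_*$.

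Finally, to deduce $\tau'_* \geq t_*$, I would unpack (\ref{eqtstar}): the constraint $A(x - te) = b$ with $x \geq 0$ is equivalent to requiring $x = x^* + te \geq 0$, i.e., $t \geq -x^*_i$ for every $i$. Hence $t_* = \max_i (-x^*_i) = -\min_i x^*_i$, and the lower bound $x^*_i \geq -\tau'_*$ just established gives $t_* \leq \tau'_* \leq \tau_*$. The only genuinely delicate step is the identification $x^* = Q^{-1} w$ together with the column-norm bookkeeping when applying Hadamard's inequality; the rest is elementary linear algebra.
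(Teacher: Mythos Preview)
Your proof is correct and follows essentially the same route as the paper: pass to the normal equations $Qx^*=w$, apply Cramer's rule, bound the numerator by Hadamard's inequality, bound the denominator via $\det(Q)\geq\lambda_{\min}^n$, and then read off $t_*\leq\tau'_*\leq\tau_*$ from the definition of $t_*$. There is no meaningful difference between the two arguments.
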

\begin{proof}   We have $Qx^*=w$. By the Cramer's rule we have

\begin{equation}
x_i^*= \frac{\det(Q_i)}{\det(Q)},
\end{equation}
where $Q_i$ is the matrix that replaces column $i$ of $Q$ with the vector $w$. Thus, \begin{equation}
x_i^* \geq - \frac{|\det(Q_i)|}{\det(Q)}.
\end{equation}
By Haddamard's inequality on determinants we have
\begin{equation}
|\det(Q_i)| \leq \bigg (\prod_{j=1}^n \Vert q_j \Vert  \bigg ) \frac{\Vert w \Vert}{\Vert q_i \Vert} \leq \bigg (\prod_{j=1}^n \Vert q_j \Vert \bigg ) \frac{\Vert w \Vert}{q_{\min}}  .
\end{equation}
On the other hand, $\det(Q) = \prod_{j=1}^n \lambda_j$, where $\lambda_1, \dots, \lambda_n$ is the set of eigenvalues of $Q$.  Since $Q$ is positive definite, these are positive and we have  $\det(Q) \geq  \lambda^n_{min}$. These imply
(\ref{eqxtar}).  Since $A((x^*+\tau'_*e)-\tau'_*e)=b$, from (\ref{eqtstar}) it follows that  $x^*+\tau'_*e  \geq 0$. Hence, $\tau'_* \geq t_*$. But also $\tau_* \geq \tau'_*$.
\end{proof}

\subsection{An Incremental Triangle Algorithm For Solving a Linear System} \label{subsec3}

In this section we consider solving $Ax=b$ without using an a priori upper bound $t$ on the value of $t_*$ needed to make  the solution of $Ax=b+tu$ nonnegative (see (\ref{eqx2})).  That is, rather than selecting a specific value for $t \geq t_*$, we gradually increase its value (starting from $t=0$) and solve the corresponding system $Ax=b+tu$ via the Triangle Algorithm to get an $\epsilon$-approximate solution. If for a particular value of $t$  the Triangle Algorithm fails to generate an $\epsilon$-approximate solution we then increase $t$ and repeat the process. However, in doing so we make use of the duality implied by Theorem \ref{thm1}, i.e. the existence of a witness computed via the Triangle Algorithm.

More specifically, the {\it Incremental Triangle Algorithm} works as follows.  Assume that for a given $t_0 \geq 0$ (initially set be to be zero)  we have attempted to compute an $\epsilon$-approximate solution for $Ax=b$, i.e. a vector $x_0 \geq 0$ such that
$$\Vert A(x_0-t_0e) - b \Vert \leq \epsilon  \max \big \{\Vert a_1 \Vert, \dots, \Vert a_n \Vert, \Vert b \Vert \big \}.$$
If this is possible we are done. If not, then $conv  \big (\{a_1, \dots, a_n, -b(t_0) \big \})$  does not contain the origin, where $b(t_0)=b+t_0u$.  Thus by Theorem \ref{thm1} the Triangle Algorithm computes a witness, i.e.
\begin{equation} \label{eqinc2}
p'(t_0) \in conv \big (\{a_1, \dots, a_n, -b(t_0)\} \big)
\end{equation}
such that the following set of $n+1$ strict inequalities are satisfied:
\begin{equation} \label{eqinc3}
\Vert p'(t_0)- a_i \Vert <  \Vert a_i \Vert, \quad \forall i=1, \dots, n,
\end{equation}
and
\begin{equation} \label{eqinc4}
\Vert p'(t_0)+ b(t_0) \Vert <  \Vert b(t_0) \Vert.
\end{equation}
Equivalently, after expanding and simplifying (\ref{eqinc3}) and (\ref{eqinc4}) we get
\begin{equation} \label{eq3b'}
\Vert p'(t_0) \Vert^2 - 2p'(t_0)^T a_i <0, \quad \forall i=1, \dots, n,
\end{equation}
\begin{equation} \label{eq3c'}
\Vert p'(t_0) \Vert^2 + 2p'(t_0)^T b(t_0) <0.
\end{equation}
From (\ref{eqinc2}) we have
\begin{equation}
p'(t_0)= \sum_{i=1}^n \alpha_i a_i -  \alpha_{n+1} (b+ t_0 u), \quad \sum_{i=1}^{n+1} \alpha_i=1, \quad \alpha_i \geq 0, \quad \forall i.
\end{equation}
Letting
\begin{equation}
p'= \sum_{i=1}^n \alpha_i a_i - \alpha_{n+1}b,
\end{equation}
we may write
\begin{equation}
p'(t_0)=p'- t_0\alpha_{n+1}u.
\end{equation}
Thus,
\begin{equation}
p'(t_0)^Ta_i=p'^Ta_i- t_0\alpha_{n+1}u^Ta_i.
\end{equation}
For each $t$ define
\begin{equation}
p'(t)=p'- t\alpha_{n+1}u.
\end{equation}
For $i=1, \dots, n$, define
\begin{equation} \label{eq3b''}
g_i(t)= \Vert p'(t) \Vert^2 - 2p'(t)^T a_i.
\end{equation}
Also, define
\begin{equation} \label{eq3c''}
g_{n+1}(t)= \Vert p'(t) \Vert^2 + 2p'(t)^T b(t).
\end{equation}
It is easy to verify that for $i=1, \dots, n$ we have
\begin{equation} \label{eq3c3}
g_i(t)=t^2 \alpha^2_{n+1} \Vert u \Vert^2 - 2 t \alpha_{n+1}(p'-a_i)^Tu+ \Vert p' \Vert ^2 - 2 p'^T a_i.
\end{equation}
The coefficient of $t^2$ in $g_{n+1}(t)$ can be shown to be
\begin{equation} \label{eq3c4}
\alpha_{n+1}(\alpha_{n+1}-2)\Vert u \Vert^2 .
\end{equation}
We now prove
\begin{prop} Suppose $\alpha_{n+1} \not =0$. For each $i=1, \dots, n$, we can select a real number $t_i >t_0$ such that $g_i(t_i) \geq 0$.
Moreover, $g_{n+1}(t) \leq 0$ for all $t$.
\end{prop}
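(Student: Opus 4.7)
The plan is to treat both $g_i$ (for $i=1,\dots,n$) and $g_{n+1}$ as quadratic polynomials in $t$, using the explicit forms in (\ref{eq3c3}) and (\ref{eq3c4}), and to read off each claim from the sign of the leading coefficient.

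For the first statement, fix $i\in\{1,\dots,n\}$: from (\ref{eq3c3}) the coefficient of $t^2$ in $g_i(t)$ is $\alpha_{n+1}^2\Vert u\Vert^2$. Since $\alpha_{n+1}\neq 0$ by hypothesis and $u=Ae\neq 0$ because $A$ is invertible, this leading coefficient is strictly positive, so $g_i(t)\to+\infty$ as $t\to+\infty$. Combined with $g_i(t_0)<0$ from the witness property (\ref{eq3b'}), continuity then yields some $t_i>t_0$ with $g_i(t_i)\geq 0$; one may take $t_i$ to be the larger real root of the quadratic $g_i$.

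For the second statement, by (\ref{eq3c4}) the leading coefficient of $g_{n+1}(t)$ equals $\alpha_{n+1}(\alpha_{n+1}-2)\Vert u\Vert^2$. Because $0<\alpha_{n+1}\leq 1$ (a convex-combination weight), the factor $\alpha_{n+1}-2<0$ and $\Vert u\Vert^2>0$, so this coefficient is strictly negative and $g_{n+1}$ is a downward-opening parabola. The claim that $g_{n+1}(t)\leq 0$ for all $t$ therefore reduces to showing that the maximum value of $g_{n+1}$ is non-positive, equivalently that its discriminant in $t$ is $\leq 0$.

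To carry out the discriminant step I would rewrite $g_{n+1}(t)=\Vert p'(t)+b(t)\Vert^2-\Vert b(t)\Vert^2$ and exploit the fact that, since $p'=\sum_{i=1}^n\alpha_i a_i-\alpha_{n+1}b$ and $p'(t)=p'-t\alpha_{n+1}u$, one has $p'(t)+b(t)=q+(1-\alpha_{n+1})b(t)$ with $q=\sum_{i=1}^n\alpha_i a_i$. Since $\sum_{i=1}^n\alpha_i=1-\alpha_{n+1}$, we may write $q=(1-\alpha_{n+1})\tilde q$ with $\tilde q\in conv(\{a_1,\dots,a_n\})$, so the target inequality becomes $\Vert q+(1-\alpha_{n+1})b(t)\Vert\leq\Vert b(t)\Vert$. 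Substituting $b(t)=b+tu$ produces an explicit quadratic in $t$ to which the discriminant test can be applied. The main obstacle lies precisely here: the witness hypothesis only delivers $g_{n+1}(t_0)<0$ at a single point, whereas the conclusion is global. I expect the closing algebraic step to combine a Cauchy--Schwarz bound on the components of $q$ and $b$ perpendicular to $u$ with the convex-combination identities $\alpha_i\geq 0$ and $\sum_{i=1}^{n+1}\alpha_i=1$ to force the discriminant to be non-positive, completing the proof.
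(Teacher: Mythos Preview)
Your argument for the first claim (the $g_i$ with $i=1,\dots,n$) is correct and coincides with the paper's: the leading coefficient $\alpha_{n+1}^2\Vert u\Vert^2$ is strictly positive, $g_i(t_0)<0$ by the witness inequalities, so $g_i$ must change sign at some $t_i>t_0$.

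For the second claim your proof is incomplete. You correctly reduce the assertion $g_{n+1}(t)\leq 0$ for all $t$ to showing that the discriminant of this downward parabola is nonpositive, and your rewriting $g_{n+1}(t)=\Vert q+(1-\alpha_{n+1})b(t)\Vert^{2}-\Vert b(t)\Vert^{2}$ is accurate. But the last step---actually verifying the discriminant inequality from the convex-combination identities and the witness data---is only sketched (``I expect the closing algebraic step to combine a Cauchy--Schwarz bound\dots''), not carried out. As you yourself note, the hypothesis gives $g_{n+1}(t_0)<0$ at a single point, and it is not evident how that, together with $\sum\alpha_i=1$ and $\alpha_i\geq 0$, forces the global discriminant bound you need. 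Until that algebra is written down, the argument has a genuine gap.

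It is worth pointing out that the paper's own proof of this half of the proposition is exactly the two-line inference you flagged as insufficient: it records that the leading coefficient $\alpha_{n+1}(\alpha_{n+1}-2)\Vert u\Vert^2$ is negative (using $0<\alpha_{n+1}<1$), recalls $g_{n+1}(t_0)<0$, and concludes directly that $g_{n+1}(t)<0$ for all $t$. No discriminant calculation appears. So your instinct that something more is required is well placed; the paper simply does not supply it, and your proposal does not yet supply it either.
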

\begin{proof}  Consider a quadratic polynomial $q(t)=c_2 t^2+  c_1t + c_0$, where $c_2 >0$.  We claim if $q(t)$ takes a negative value at some point $t_0$, then it must have a real root $t_0' >t_0$.  The proof of this follows from the fact that as $t$ approaches infinity, $q(t)$ approaches infinity so that $q(t)$ changes sign from negative at $t_0$ to a positive value. Thus $q(t)$ must be zero at some $t_0' >t_0$.  For each $i=1, \dots, n$, the coefficient of the quadratic term  in $g_i(t)$ is positive. This follows from the fact that $u \not =0$ and $\alpha_{n+1} \not =0$, see (\ref{eq3c3}). Hence applying the argument on $q(t)$ we conclude that each $g_i(t)$, $i=1, \dots, n$, has a root $t_i$ larger than $t_0$.

Since the coefficient of the quadratic term in  $g_{n+1}(t)$  is $\alpha_{n+1} (\alpha_{n+1}-2) \Vert u \Vert^2$ and $ 0 < \alpha_{n+1} < 1$, the coefficient is negative. Since $g_{n+1}(t_0) <0$, it follows that $g_{n+1}(t) <0$ for all $t$.
\end{proof}

Let $t_0'=\min\{t_1, \dots, t_n\}$.  Increasing $t_0$ to $t_0'$, at least for one $i = 1, \dots, n$ we will have $g_i(t'_0) \geq 0$.  Once $t_0'$ is computed we have a pivot point and we reapply the Triangle Algorithm, testing if $Ax=b+t_0'u$, $x \geq 0$ is feasible.  In doing so we can start the Triangle Algorithm with $p'(t_0')=p'+t_0'b$. In practice we do not need to find the exact value of $t_i$ which requires solving the quadratic equations $g_i(t)=0$, $i=1, \dots, n$. We can simply compute an upper bound on the roots. Such upper bounds can be computed easily.  Additionally, we can choose $t_0'$ to be such that $t_0'-t_0$ differs by a natural number, guaranteeing that this iterative process would  eventually select a value $t_0'$ exceeding the value $t_*$, see (\ref{eqtstar}).  A formal description of the Incremental Triangle Algorithm is as follows.\\

\begin{center}
\begin{tikzpicture}
\node [mybox] (box){%
    \begin{minipage}{0.9\textwidth}
\begin{center}
\underline{{\bf Incremental Triangle Algorithm($A$, $b$)}}
 \end{center}
\begin{itemize}
\item
{\bf Step 0.} ({\bf Initialization}) Let $u=Ae$, $e=(1, \dots, 1)^T \in \mathbb{R}^m$. Select  $p'=A\alpha - \alpha_{n+1}b  \in conv(S)$, where $\alpha=(\alpha_1, \dots, \alpha_n)^T \in \mathbb{R}^n$, $\sum_{i=1}^{n+1}\alpha_i=1$, $\alpha_i \geq 0$. Set $t_0=0$.

\item
{\bf Step 1.}
Given $p' =A\alpha - \alpha_{n+1}b \in conv(S)$, set $x_0=\alpha/\alpha_{n+1}$. Define $\tau_0$ according to
\begin{equation}
E(\tau_0)=\Vert Ax_0 - (b + \tau_0 u)\Vert  = \min \bigg \{ \Vert Ax_0-(b+tu) \Vert :  \quad t \geq t_0  \bigg \}.
\end{equation}
Replace $t_0$ with $\tau_0$.
If $E(t_0) \leq \epsilon \rho$, set $x_0' =(x_0- t_0 e)$, stop.
\item
{\bf Step 2.} If $p'(t_0)= p'- \alpha_{n+1}t_0u$ is a witness with respect to $conv \big ( \{a_1, \dots, a_n, -(b + t_0 u) \}  \big)$,  go to Step 3. Otherwise,  call Triangle Algorithm ($\{a_1, \dots, a_n, -(b + t_0 u)\}$, $0$)
to compute a new iterate $p''(t_0)$ (see (\ref{pdp}) in Triangle Algorithm):
\begin{equation}
p''(t_0)= p''- \beta_{n+1}t_0u, \quad  \text{where}~p''=A \beta - \beta_{n+1}b \in  conv\big (\{a_1, \dots, a_n, -(b+ t_0 u) \} ) \big).
\end{equation}
Replace $p'$ with $p''$, $\alpha$ with $\beta$, and $\alpha_{n+1}$ with $\beta_{n+1}$. Go to Step 1.
\item
{\bf Step 3.}  Compute $t_0'$, the smallest value $t$ such that $g_i(t) \geq 0$ for some $i=1, \dots, n$ (see (\ref{eq3c3})). Replace $t_0$ with $t_0'$. Go to Step 2.
\end{itemize}

    \end{minipage}};
\end{tikzpicture}
\end{center}

\begin{remark}
The computation of $\tau_0$ in Step 1 is an auxiliary step to improve the error for a given $x_0$. For a given $\epsilon$, eventually $x_0'$ will give the desired $\epsilon$-approximate solution.
\end{remark}

We will consider an example.  In the example below we will implement the algorithm with and without the auxiliary Step 1 to show different scenarios.

\begin{example} Consider the $2 \times 2$ linear system (\ref{example2}), having $x=(-1,-2)^T$ as its solution.  Thus $t_*=2$.
We consider one iteration in solving $Ax=b$ by the Incremental Triangle Algorithm.
\begin{equation} \label{example2}
 \begin{pmatrix}
  ~2 & -1  \\
  ~1 & ~1
 \end{pmatrix}
  \begin{pmatrix}
  x_1 \\
  x_2
 \end{pmatrix}
 =
 \begin{pmatrix}
  0 \\
  -3
 \end{pmatrix}
\end{equation}
We have $u=Ae=(1, 2)^T$. We set $t_0=0$ and  test if $0$ is in the convex hull of the set
\begin{equation}
\bigg \{
 a_1= \begin{pmatrix} 2 \\
  1
 \end{pmatrix},  a_2=\begin{pmatrix}
  -1 \\
  ~1
 \end{pmatrix},  -b=\begin{pmatrix}
  ~0 \\
  3
 \end{pmatrix}  \bigg \}.
\end{equation}
Suppose we select $p'=(0,3/2)^T$.  It is easy to check that

\begin{equation}
p'=\alpha_1 a_1+ \alpha_2 a_2 - \alpha_3 b, \quad \alpha_1=1/4, \quad \alpha_2= 1/2, \quad \alpha_3=1/4.
\end{equation}
Thus the initial approximate solution to $Ax=b$ is
\begin{equation}
x_0=(\frac{\alpha_1}{\alpha_3}, \frac{\alpha_2}{\alpha_3})^T= (1,2)^T.
\end{equation}
To compute $\tau_0$ we minimize
\begin{equation}
E(t)=\Vert Ax_0-(b + tu) \Vert= \Vert (-t, 6-2t)^T \Vert
\end{equation}
for $t \geq t_0$.  Note that $E(t_0)= 6$. However, the minimum of $E(t)$ is attained at $\tau_0=12/5=2.4$ and $E(\tau_0)=6/\sqrt{5} \approx 2.68$. For simplicity of the computation we round the value of $\tau_0$ to $2$. Next, we replace $t_0$ with $\tau_0$. We have, $E(2)=2\sqrt{2}$.   In Step 2 of the algorithm we have $p'(t_0)=p'-\alpha_3t_0u=(-1/2,1/2)^T$.   From Remark \ref{rmk1}
we may select $a_1$ as the pivot point as we have,
\begin{equation}
p'(t_0)^Ta_1=-1/2 <0.
\end{equation}
The corresponding step size is
\begin{equation}
 \alpha =-\frac{p'(t_0)^T(a_1-p'(t_0))}{\Vert a_1-p'(t_0) \Vert^2}= \frac{2}{13}.
 \end{equation}
 Thus
 \
 $$p''(t_0)=(1-\frac{2}{13})p'+ \frac{2}{13}a_1 = \frac{11}{13}(\frac{1}{4}a_1 + \frac{1}{2}a_2 - \frac{1}{4}(b+ 2u))+ \frac{2}{13}a_1=$$
 \begin{equation}
 \frac{19}{52}a_1 + \frac{11}{26}a_2 - \frac{11}{52}(b+ 2u).
 \end{equation}
 This replaces $p'(t_0)$, and the new $p'$ becomes
\begin{equation}
p'=\frac{19}{52}a_1 + \frac{11}{26}a_2 - \frac{11}{52}b.
\end{equation}
 The next approximation to the linear system is
 $x_0=(19/11, 2)^T$.  The corresponding error, $E(t_0)=\sqrt{936}/11 \approx 2.78$ which is less than $2\sqrt{2}$.  Now we optimize again by computing $\tau_0$ as the minimum of $E(t)= \Vert A x_0-(b+tu) \Vert$. The minimum occurs for $\tau_0=164/55$ and is approximately $1.7$. We replace $t_0$ with this and repeat Step 2.

Rather than continuing with this iteration, we will next consider the same example but without computing $\tau_0$. This will allow us to implement one iteration of Step 3 in this example.  Starting again with the point $p'=(0,3/2)$ we can easily check that it is a witness, i.e.
\begin{equation}
p'^T a_1 > \frac{ \Vert p' \Vert^2}{2}, \quad p'^T a_2 >  \frac{ \Vert p' \Vert^2}{2},
 \quad -p'^T b > \frac{ \Vert p' \Vert^2}{2}.
 \end{equation}
Then
\begin{equation}p'(t)=p'- t\alpha_{3}u=
 (-\frac{t}{4}, \frac{3}{2}-\frac{t}{2})^T.
 \end{equation}
Using this to compute $p'(t)^Ta_i$, for $i=1,2$ we get,
\begin{equation}
g_1(t)=\Vert p'(t) \Vert ^2- 2p'(t)^Ta_1 = \frac{5}{16}t^2+ \frac{1}{2}t - \frac{3}{4},
\end{equation}
\begin{equation}
g_2(t)=\Vert p'(t) \Vert ^2- 2p'(t)^Ta_2 = \frac{5}{16}t^2- t - \frac{3}{4}.
\end{equation}
Also, we  have $b(t)=b+tu=(0,-3)^T+t(1,2)^T=(t,-3+2t)^T$. So we have
\begin{equation}
g_3(t)= \Vert p'(t) \Vert ^2+ 2p'(t)^Tb(t)= -\frac{35}{16}t^2+\frac{15}{2} t - \frac{27}{4}.
\end{equation}
As analyzed for the general case, the quadratic term of $g_3(t)$ has a negative coefficient so that $g_3(t)$ remains negative for all $t$.
We select $t'_0$  to be the positive root of $g_1(t)$, namely $(-8+\sqrt{304})/10 \approx .94$.  The positive root of $g_2(t)$ is larger than this value.

The algorithm then replaces $t_0$ with $t'_0$, and moves to Step 2, setting $p'(t_0)=p'+t_0 \alpha_3u$, then  checking if it is a witness with respect to $ conv({a_1,a_2, -b(t_0)})$. Note that since $t_*=2$,  we expect that if $p'(t_0)$ is not a witness, subsequent iterations of the algorithm (without the optimization step that computes $\tau_0$) must eventually produce a witness.
\end{example}

\begin{remark}  The Incremental Triangle Algorithm avoids using an a priori estimate for $t_*$.  It increases $t$ incrementally, starting with $t=0$, generating a sequence of approximate solutions $x_k$ that converge to the solution of $Ax=b$.  Here $x_k$ and $x_{k+1}$ may correspond to the same value of $t$, or to two consecutive values of $t$.  The increase in the $t$ value in the incremental algorithm is done in a conservative fashion. In order to guarantee that it eventually gets to be large enough, e.g. at least as large as $t_*$ (see (\ref{eqtstar})), or large enough such that an $\epsilon$-approximate solution is possible, we will make sure that the difference between two consecutive $t$ values is at least a natural number $n_0$.

Another possibility in increasing the $t$ value after each report of the infeasibility of $Ax =b(t)$, $x \geq 0$ via a witness, is to double it and add one.
Clearly, this would require at most $O(\log_2 (t_*+1))$ calls to the Triangle Algorithm.  However, the potential advantage in the Incremental Triangle Algorithm is that once for a particular value $t$ we get a witness $p'$, for the next value, say $t'$, if still not sufficiently large, the algorithm will find a new witness $p''$ quickly, in a few iterations.
\end{remark}

\section{Final Remarks}

In this paper we have proposed two novel iterative methods for solving linear system of equations approximately to within prescribed errors. These approximation algorithms are based on the  Triangle Algorithm for a convex hull problem, \cite{kal12}.  Undoubtedly testing the practical performance of these algorithms will require extensive experimentation, as well as comparisons with existing computational results via exact or iterative algorithms.  Ideally, these experimentations should be applied to different types of matrices, from large to sparse matrices.  We hope to carry out some experimentation to assess the performance of the Triangle Algorithm for the convex hull problem, as well as for solving a linear system of equations via the two proposed algorithms. Optimistically, the Triangle Algorithm will perform well in computing approximate solutions to linear systems, perhaps with much better practical performance than its theoretical worst-case complexity.  Potentially, the proposed algorithms can also be combined with the existing iterative algorithms for linear systems.  The simplicity of the Triangle Algorithm and its theoretical performance raises the optimism that it will encourage new research and applications.

\end{document}